	\newtheorem{theorem}{Theorem}
	\newtheorem{proposition}[theorem]{Proposition}%
	\newtheorem{remark}{Remark}%
	\newtheorem{lemma}{Lemma}%
	\newtheorem{definition}{Definition}%
	\newtheorem{corollary}{Corollary}%
\begin{document}
		
		\title[Article Title]{Approximate Controllability of Linear Fractional  Impulsive Evolution  Equations in Hilbert Spaces}
		\author[1]{\fnm{Javad} \sur{A. Asadzade}}\email{javad.asadzade@emu.edu.tr}
		
		\author[1,2]{\fnm{Nazim} \sur{I. Mahmudov}}\email{nazim.mahmudov@emu.edu.tr}
		\equalcont{These authors contributed equally to this work.}
		
		\affil[1]{\orgdiv{Department of Mathematics}, \orgname{Eastern Mediterranean University}, \orgaddress{\street{} \city{Mersin 10, 99628, T.R.}, \postcode{5380},\country{North Cyprus, Turkey}}}
		
		\affil[2]{\orgdiv{	Research Center of Econophysics}, \orgname{Azerbaijan State University of Economics (UNEC)}, \orgaddress{\street{Istiqlaliyyat Str. 6}, \city{Baku }, \postcode{1001},  \country{Azerbaijan}}}

	\abstract{
This paper investigates the approximate controllability of linear fractional impulsive evolution equations in Hilbert spaces. The system under consideration involves the Caputo fractional derivative of order $0<\alpha\leq 1$, a closed linear operator generating a strongly continuous semigroup, and instantaneous state jumps governed by bounded linear impulse operators. We first derive an explicit representation of the mild solution by combining fractional solution operators with impulsive operators. Using this representation, we characterize the approximate controllability of the system through a necessary and sufficient condition expressed in terms of the convergence of an associated family of impulsive resolvent operators. This resolvent condition extends the classical criterion for approximate controllability to the fractional impulsive setting. To illustrate the applicability of our theoretical results, a concrete example is provided. The analysis presented here bridges the gap between the well-established theory for integer-order impulsive systems and the more complex fractional case, highlighting the distinct challenges and solutions arising from the interplay of fractional dynamics and impulsive effects.}

		\keywords{Approximate controllability; Fractional evolution equations; Impulsive systems.}
		
		\maketitle
		
	\section{Introduction}\label{sec1}

Many natural and engineered evolutionary processes exhibit impulsive behavior, characterized by abrupt, short-term perturbations occurring at specific instants in time. Such impulsive dynamical systems are ubiquitous across disciplines including artificial intelligence, genetics, biological systems, population dynamics, neural networks, robotics, telecommunications, and computer science. For a comprehensive treatment of impulsive differential equations, we refer to the foundational monograph by Lakshmikantham and Simeonov~\cite{14}.

Within the theory of dynamical control systems, controllability is a fundamental structural property. It describes the ability to steer the system from any given initial state to any desired target state within the state space, in finite time, using an admissible set of control inputs. In recent decades, significant attention has been devoted to studying the controllability of systems whose state undergoes sudden jumps at discrete moments-so-called impulsive control systems. The literature on this topic is substantial, with key contributions addressing various aspects of controllability and observability for linear and nonlinear impulsive systems~\cite{9,10,11,12,13,18,26,27,28,29}.

This paper focuses on establishing necessary and sufficient conditions for the approximate controllability of dynamical systems governed by linear impulsive differential equations in Hilbert spaces. Our core assumption is that the principal operator \(A\), acting on the state, is the infinitesimal generator of a strongly continuous semigroup. Approximate controllability-the capability to steer the system from any initial state to an arbitrarily small neighborhood of any final state-becomes a natural and often more attainable objective in infinite-dimensional settings, due to the prevalence of non-closed subspaces~\cite{25}.

A powerful methodological approach, employed successfully in~\cite{3,8,17} successfully, reformulates the approximate controllability problem as the limit of a sequence of well-posed optimal control problems. By recasting the optimality conditions in terms of the convergence of associated resolvent operators, one obtains a concise and verifiable criterion for approximate controllability. This ``resolvent condition'' has proven to be remarkably useful and has been widely applied in the study of semilinear and impulsive evolution equations~\cite{17,21,23}. However, the introduction of impulsive effects introduces significant complications, altering the structure of the solution and the associated controllability Gramians.

The analysis becomes considerably more intricate when fractional dynamics are incorporated. Fractional differential equations, involving derivatives of non-integer order, provide a superior framework for modeling processes with inherent memory and hereditary properties, such as viscoelasticity, anomalous diffusion, and complex biological phenomena~\cite{1,2}. Controllability analysis for fractional systems must contend with the singular kernel of the fractional derivative and the fact that the solution operator forms a resolvent family rather than a semigroup~\cite{4,31}. This precludes the direct application of standard semigroup tools and necessitates the use of fractional calculus and operator theory.

Parallel to the development of impulsive systems, significant advances have been made in the controllability analysis of fractional evolution equations. Important contributions include the work of Chen, Zhang, and Li \cite{Chen,Chen2}, who investigated existence and approximate controllability for fractional systems with nonlocal conditions using resolvent operators, and Liu and Li \cite{Liu}, who established approximate controllability criteria for fractional evolution systems with Riemann–Liouville derivatives. However, these studies do not address the combined effects of fractional dynamics with impulsive discontinuities, which presents a distinct set of analytical challenges not encountered in either purely fractional or purely impulsive settings.

In a seminal work, Mahmudov~\cite{3} provided a complete characterization of the approximate controllability for linear impulsive evolution equations of the form:
\begin{equation}\label{eq:classical_impulsive}
	\begin{cases}
		x^{\prime}(t)=Ax(t)+Bu(t), & t\in[0,b]\setminus\{t_{1},\dots,t_{n}\},\\
		\Delta x(t_{k+1})=D_{k+1}x(t_{k+1})+E_{k+1}v_{k+1},& k=0,\dots,n-1,\\
		x(0)=x_{0}.
	\end{cases}
\end{equation}
The representation of the solution was given in terms of the semigroup and impulsive operators, leading to a necessary and sufficient condition for approximate controllability via an impulsive resolvent operator. These results were subsequently extended to semilinear systems via fixed-point methods~\cite{Javad1}, to finite-approximate controllability~\cite{Javad2}, and to stochastic impulsive systems with optimal control~\cite{Javad3}. Related studies on impulsive neutral and integro-differential systems further illustrate the active development in this area~\cite{Gupta,Gupta2}.

A natural and compelling question arises: \textbf{Can analogous controllability results be established for linear \emph{fractional} impulsive evolution equations?} That is, if we replace the classical derivative in~\eqref{eq:classical_impulsive} with a fractional Caputo derivative, do the core structural results on approximate controllability persist?

Addressing this question is non-trivial and reveals several profound mathematical challenges specific to the fractional setting:

\begin{enumerate}
    \item \textbf{Non-Semigroup Solution Structure:} The mild solution of a fractional evolution equation is expressed via fractional resolvent families (often involving Mittag-Leffler type functions) which lack the semigroup property. This necessitates a fundamentally different analytical framework from that used in~\cite{3}.
    
    \item \textbf{Singularity and Memory Effects:} The Caputo derivative \(^C D^\alpha_t\) has a singular kernel \((t-s)^{\alpha-1}\), embedding long-range memory into the system dynamics. This singularity complicates the analysis of solution regularity, the construction of appropriate solution operators, and the estimation of associated integrals.
    
    \item \textbf{Integration of Impulsive Jumps:} Coupling the non-local history dependence of fractional derivatives with instantaneous state jumps at impulse points creates a complex hybrid dynamic. The interplay between the ``memory'' of the fractional operator and the ``resets'' from impulses requires careful handling to define the solution properly across intervals.
\end{enumerate}

In this paper, we tackle these challenges by studying the approximate controllability of the following linear fractional impulsive evolution equation in a Hilbert space \(H\):
\begin{equation}\label{eq:fractional_main}
	\begin{cases}
		{^{C}}D^{\alpha}_{t_k^+} x(t)=Ax(t)+Bu(t), & t \in [0,\,b]\setminus\{t_1,\dots,t_n\},\\
		\Delta x(t_{k})=D_{k}x(t_{k})+E_{k}v_{k},& k=1,\dots,n,\\
		x(0)=x_{0}.
	\end{cases}
\end{equation}
Here, \(0 < \alpha \leq 1\), the state \(x(\cdot)\) takes values in \(H\), and the controls consist of a distributed control \(u \in L^2([0, b], U)\) and discrete impulsive controls \(v_k \in U\). The operator \(A: D(A) \subset H \to H\) generates a strongly continuous semigroup, and \(B \in \mathcal{L}(U,H)\), \(D_k, E_k \in \mathcal{L}(H)\) are bounded linear operators. The discontinuity at \(t_k\) is given by \(\Delta x(t_k) = x(t_k^+) - x(t_k^-)\), with the convention \(x(t_k^-) = x(t_k)\).

Our primary objectives are: (i) to derive a suitable mild solution formula for system~\eqref{eq:fractional_main} using fractional calculus and impulsive conditions; (ii) to define the corresponding controllability Gramian operator in the fractional impulsive context; and (iii) to establish a necessary and sufficient condition for approximate controllability, phrased in terms of the convergence of a family of resolvent operators. This result will serve as a rigorous fractional counterpart to the classical theory developed in~\cite{3}, highlighting both the parallels and the essential distinctions introduced by the fractional order derivative.

The remainder of this paper is structured as follows: Section~2 reviews essential preliminaries on fractional calculus, impulsive systems, and controllability concepts. Section~3 is dedicated to formulating the mild solution of system~\eqref{eq:fractional_main}. In Section~4, we present and prove our main result on the characterization of approximate controllability. Section~5 provides concluding remarks and suggests directions for future research.
	
\section{Mathematical Preliminaries}\label{sec2}

This section provides the necessary mathematical background and key definitions used throughout the paper. We introduce concepts from fractional calculus, operator theory, and impulsive systems, which are fundamental to our subsequent analysis.

\begin{definition}\cite{1}
Let \( x : [c, b] \to \mathbb{R} \) be an absolutely continuous function, and let \( \alpha \in (0, 1) \). The \emph{Caputo fractional derivative} of order \( \alpha \) with lower limit \( c \) is defined by
\[
{}^{C}D^{\alpha}_{c^+} x(t) := \frac{1}{\Gamma(1 - \alpha)} \int_{c}^{t} \frac{x'(s)}{(t - s)^{\alpha}}\, ds, \quad t > c,
\]
where \( \Gamma(\cdot) \) denotes the \emph{Gamma function}:
\[
\Gamma(\alpha) := \int_0^{\infty} t^{\alpha - 1} e^{-t} \, dt, \quad \alpha > 0.
\]
\end{definition}

Throughout this paper, \( H \) and \( U \) denote real Hilbert spaces. Let \( \mathcal{L}(H) \) be the Banach space of all bounded linear operators from \( H \) into itself, equipped with the operator norm.

\begin{definition}\cite{2}
A one-parameter family \( \{S(t)\}_{t \geq 0} \subset \mathcal{L}(H) \) is called a \emph{strongly continuous semigroup} (or \( C_0 \)-semigroup) on \( H \) if
\begin{itemize}
    \item[(i)] \( S(t)S(s) = S(t + s) \) for all \( t, s \geq 0 \);
    \item[(ii)] \( S(0) = I \), where \( I \) is the identity operator on \( H \);
    \item[(iii)] \( \lim_{t \to 0^+} S(t)x = x \) for every \( x \in H \).
\end{itemize}
The infinitesimal generator \( A : D(A) \subset H \to H \) of the semigroup is defined by
\[
Ax := \lim_{t \to 0^+} \frac{S(t)x - x}{t}, \quad x \in D(A),
\]
where \( D(A) \) consists of all \( x \in H \) for which this limit exists.
\end{definition}

The Wright function and Mittag-Leffler functions play a central role in representing solutions to fractional differential equations.

\begin{definition}\cite{1}
For \( 0 < \alpha < 1 \), the \emph{Wright function} \( \Psi_\alpha \) is defined by
\[
\Psi_\alpha(\theta) := \sum_{n=0}^{\infty} \frac{(-\theta)^n}{n! \, \Gamma(-\alpha n + 1 - \alpha)} = \frac{1}{\pi} \sum_{n=1}^{\infty} \frac{(-\theta)^n}{(n-1)! \, \Gamma(n\alpha)} \sin(n\pi\alpha), \quad \theta \in \mathbb{C}.
\]
\end{definition}

\begin{remark}\cite{1}
For \( \theta > 0 \), the Wright function can be expressed as
\[
\Psi_{\alpha}(\theta) = \frac{1}{\pi \alpha} \sum_{n=1}^{\infty} (-\theta)^{n-1} \frac{\Gamma(1+\alpha n)}{n!} \sin(n\pi\alpha), \quad 0 < \alpha < 1.
\]
\end{remark}

\begin{definition}\cite{1}
For \( \alpha > 0 \), \( \beta \in \mathbb{C} \), and \( z \in \mathbb{C} \), the \emph{generalized Mittag-Leffler function} is defined by the series
\[
E_{\alpha,\beta}(z) := \sum_{k=0}^{\infty} \frac{z^k}{\Gamma(\alpha k + \beta)}.
\]
Two frequently used special cases are
\[
E_{\alpha}(z) := E_{\alpha,1}(z) \quad \text{and} \quad e_{\alpha}(z) := E_{\alpha,\alpha}(z).
\]
\end{definition}

The following sector in the complex plane is essential for defining almost sectorial operators. For \( 0 < \mu < \pi \), let
\[
S_\mu := \{ z \in \mathbb{C} \setminus \{0\} : |\arg z| \leq \mu \} \cup \{0\},
\]
and denote its interior by \( S_\mu^0 := S_\mu \setminus \{0\} \).

Key properties of the Wright function are summarized below.

\begin{proposition}\cite{1}\label{prop:Wright_properties}
The Wright function \( \Psi_\alpha \) satisfies the following:
\begin{enumerate}
    \item[(i)] \( \Psi_\alpha(t) \geq 0 \) for \( t > 0 \).\\
    \item[(ii)] \( \displaystyle \int_0^\infty \frac{\alpha}{t^{\alpha+1}} \Psi_\alpha(t^{-\alpha}) e^{-\lambda t} \, dt = e^{-\lambda^\alpha}, \quad \operatorname{Re}(\lambda) \geq 0 \).\\
    \item[(iii)] \( \displaystyle \int_0^\infty \Psi_\alpha(t) t^r \, dt = \frac{\Gamma(1+r)}{\Gamma(1+\alpha r)}, \quad r > -1 \).\\
    \item[(iv)] \( \displaystyle \int_0^\infty \Psi_\alpha(t) e^{-z t} \, dt = E_\alpha(-z), \quad z \in \mathbb{C} \).\\
    \item[(v)] \( \displaystyle \int_0^\infty \alpha t \Psi_\alpha(t) e^{-z t} \, dt = e_\alpha(-z), \quad z \in \mathbb{C} \).
\end{enumerate}
\end{proposition}
We now introduce the solution operators associated with fractional evolution equations. Let \( A \) be the generator of a \( C_0 \)-semigroup \( \{S(t)\}_{t \geq 0} \) on \( H \). For \( t \geq 0 \), define the families of operators \( \{\mathcal{S}_\alpha(t)\}_{t \geq 0} \) and \( \{\mathcal{P}_\alpha(t)\}_{t \geq 0} \) by
\begin{align}
    \mathcal{S}_\alpha(t) &:= \int_0^\infty \Psi_\alpha(\theta) S(t^\alpha \theta) \, d\theta, \label{eq:S_alpha_def} \\
    \mathcal{P}_\alpha(t) &:= \int_0^\infty \alpha \theta \Psi_\alpha(\theta) S(t^\alpha \theta) \, d\theta. \label{eq:P_alpha_def}
\end{align}
These operators are crucial for constructing mild solutions to fractional differential equations.

In the case where \( A \) is an \emph{almost sectorial operator}, a more general class than infinitesimal generators of \( C_0 \)-semigroups, the corresponding semigroup \( \{\mathcal{Q}(t)\}_{t \geq 0} \) exhibits singular behavior at \( t = 0 \). For \( -1 < p < 0 \) and \( 0 < \omega < \frac{\pi}{2} \), we define:

\begin{definition}\cite{4}
Let \( -1 < p < 0 \) and \( 0 < \omega < \frac{\pi}{2} \). Denote by \( \Theta_p^\omega(H) \) the family of all closed linear operators \( A : D(A) \subset H \to H \) satisfying:
\begin{enumerate}
    \item[(i)] \( \sigma(A) \subset S_\omega \), where \( \sigma(A) \) is the spectrum of \( A \);
    \item[(ii)] For every \( \omega < \mu < \pi \), there exists a constant \( C_\mu > 0 \) such that
    \[
    \|R(z; A)\| \leq C_\mu |z|^p \quad \text{for all } z \in \mathbb{C} \setminus S_\mu,
    \]
    where \( R(z; A) = (zI - A)^{-1} \) is the resolvent operator of \( A \).
\end{enumerate}
An operator \( A \in \Theta_p^\omega(H) \) is called an \emph{almost sectorial operator} on \( H \).
\end{definition}

If \( A \in \Theta_p^\omega(H) \), it generates an analytic semigroup \( \{\mathcal{Q}(t)\}_{t > 0} \) of growth order \( 1+p \), which can be represented via the Laplace inversion formula:
\[
\mathcal{Q}(t) = \frac{1}{2\pi i} \int_{\Gamma_\theta} e^{-tz} R(z; A) \, dz, \quad t \in S^0_{\frac{\pi}{2} - \omega},
\]
where \( \Gamma_\theta \) is a contour surrounding the sector \( S_\omega \).

\begin{remark}\cite{4}
If \( A \in \Theta_p^\omega(H) \), the generated semigroup \( \mathcal{Q}(t) \) is analytic in an open sector but is not strongly continuous at \( t = 0 \) for general initial data. It exhibits a singularity of order \( t^{-p-1} \) as \( t \to 0^+ \).
\end{remark}

Properties of this singular semigroup and the associated fractional solution operators are given below.

\begin{proposition}\cite{4}\label{prop:almost_sectorial}
Let \( A \in \Theta_p^\omega(H) \) with \( -1 < p < 0 \) and \( 0 < \omega < \frac{\pi}{2} \). Then:
\begin{enumerate}
    \item[(i)] \( \mathcal{Q}(t) \) is analytic in \( S^0_{\frac{\pi}{2} - \omega} \), and
    \[ \frac{d^n}{dt^n} \mathcal{Q}(t) = (-A)^n \mathcal{Q}(t) \quad \text{for } t \in S^0_{\frac{\pi}{2} - \omega}, \ n \in \mathbb{N}. \]
    \item[(ii)] The semigroup property \( \mathcal{Q}(s+t) = \mathcal{Q}(s)\mathcal{Q}(t) \) holds for all \( s, t \in S^0_{\frac{\pi}{2} - \omega} \).
    \item[(iii)] There exists a constant \( c_0 = c_0(p) > 0 \) such that
    \[ \|\mathcal{Q}(t)\|_{\mathcal{L}(H)} \leq c_0 t^{-p-1} \quad \text{for all } t > 0. \]
    \item[(iv)] If \( \beta > 1 + p \), then \( D(A^\beta) \subset \Sigma_{\mathcal{Q}} := \{ x \in H : \lim_{t \to 0^+} \mathcal{Q}(t)x = x \} \).
\end{enumerate}
\end{proposition}

The corresponding fractional solution operators \( \mathcal{S}_\alpha(t) \) and \( \mathcal{P}_\alpha(t) \) (defined similarly as in \eqref{eq:S_alpha_def} and \eqref{eq:P_alpha_def} with \( S(t) \) replaced by \( \mathcal{Q}(t) \)) inherit certain boundedness and regularity properties.

\begin{proposition}\cite{4}\label{prop:fractional_op_bounds}
For each fixed \( t > 0 \), \( \mathcal{S}_\alpha(t) \) and \( \mathcal{P}_\alpha(t) \) are linear and bounded operators on \( H \). Moreover, there exist constants \( C_1, C_2 > 0 \) such that for all \( t > 0 \),
\[
\|\mathcal{S}_\alpha(t)\|_{\mathcal{L}(H)} \leq C_1 t^{-\alpha(1+p)}, \quad \|\mathcal{P}_\alpha(t)\|_{\mathcal{L}(H)} \leq C_2 t^{-\alpha(1+p)},
\]
where \( C_1 = c_0 \frac{\Gamma(-p)}{\Gamma(1-\alpha(1+p))} \) and \( C_2 = c_0 \alpha \frac{\Gamma(1-p)}{\Gamma(1-\alpha p)} \), with \( c_0 \) as in Proposition \ref{prop:almost_sectorial}.
\end{proposition}

For the main body of our work, we assume that \( A \) generates a \( C_0 \)-semigroup. In this case, the fractional solution operators are well-behaved, as summarized in the following lemma.

\begin{lemma}\cite{31}\label{lem:solution_op_properties}
Assume \( A \) generates a \( C_0 \)-semigroup \( \{S(t)\}_{t \geq 0} \) on \( H \) satisfying \( \|S(t)\| \leq M e^{\omega t} \) for some \( M \geq 1 \), \( \omega \geq 0 \). Then the operators \( \mathcal{S}_\alpha(t) \) and \( \mathcal{P}_\alpha(t) \) defined in \eqref{eq:S_alpha_def} and \eqref{eq:P_alpha_def} satisfy the following:
\begin{enumerate}
    \item[(i)] For any \( t \geq 0 \), \( \mathcal{S}_\alpha(t) \) and \( \mathcal{P}_\alpha(t) \) are bounded linear operators on \( H \). Specifically,
    \[
    \|\mathcal{S}_\alpha(t)\|_{\mathcal{L}(H)} \leq M, \quad \|\mathcal{P}_\alpha(t)\|_{\mathcal{L}(H)} \leq \frac{M}{\Gamma(\alpha)} \quad \text{for all } t \geq 0.
    \]
    \item[(ii)] The mappings \( t \mapsto \mathcal{S}_\alpha(t)x \) and \( t \mapsto \mathcal{P}_\alpha(t)x \) are continuous from \( [0,\infty) \) into \( H \) for every \( x \in H \).
    \item[(iii)] The families \( \{\mathcal{S}_\alpha(t)\}_{t \geq 0} \) and \( \{\mathcal{P}_\alpha(t)\}_{t \geq 0} \) are strongly continuous.
    \item[(iv)] If the semigroup \( \{S(t)\}_{t \geq 0} \) is compact for \( t > 0 \), then \( \mathcal{S}_\alpha(t) \) and \( \mathcal{P}_\alpha(t) \) are also compact operators for \( t > 0 \), and the mappings \( t \mapsto \mathcal{S}_\alpha(t) \) and \( t \mapsto \mathcal{P}_\alpha(t) \) are continuous in the uniform operator topology for \( t > 0 \).
\end{enumerate}
\end{lemma}

These operator families \( \mathcal{S}_\alpha(t) \) and \( \mathcal{P}_\alpha(t) \) will serve as the building blocks for constructing the mild solution of the fractional impulsive control system \eqref{eq:fractional_main} in the next section.
	\section{Main Results}\label{sec:main}

In this section, we investigate the approximate controllability of the fractional impulsive linear evolution equation \eqref{eq:fractional_main} in Hilbert spaces. First, we derive a representation of its mild solution using fractional solution operators and impulse effects. Subsequently, we establish necessary and sufficient conditions for approximate controllability in terms of an impulsive resolvent operator.

\subsection{Representation of the Mild Solution}

The following lemma provides the explicit form of the mild solution to system \eqref{eq:fractional_main}, which is essential for the subsequent controllability analysis.

\begin{lemma}\label{lem:solution_representation}
The mild solution of \eqref{eq:fractional_main} is given by
\begin{align}
    \begin{cases}
        x(t)=\mathcal{S}_{\alpha}(t)x(0)+\int_{0}^{t}(t-s)^{\alpha-1}\mathcal{P}_{\alpha}(t-s)Bu(s)\,ds, \, 0\leq t\leq t_{1},\\[6pt]
        x(t)= \mathcal{S}_{\alpha}(t-t_{k}) x(t^{+}_{k})+\int_{t_{k}}^{t}(t-s)^{\alpha-1} \mathcal{P}_{\alpha}(t-s)Bu(s)\,ds, \\ t_k < t \leq t_{k+1},\, k=1,\dots,n,
    \end{cases}
\end{align}
where the post-impulse state \( x(t_k^+) \) is recursively defined by
\begin{align}\label{eq:impulse_state}
    \begin{aligned}
    x(t^{+}_{k}) &= \prod_{j=k}^{1}(\mathcal{I}+D_{j})\mathcal{S}_{\alpha}(t_{j}-t_{j-1})x_{0} \\
    &+ \sum_{i=1}^{k}\prod_{j=k}^{i+1}(\mathcal{I}+D_{j})\mathcal{S}_{\alpha}(t_{j}-t_{j-1})
        (\mathcal{I}+D_{i})\int_{t_{i-1}}^{t_{i}}(t_{i}-s)^{\alpha-1}\mathcal{P}_{\alpha}(t_{i}-s)Bu(s)\,ds \\
    &+ \sum_{i=2}^{k}\prod_{j=k}^{i}(\mathcal{I}+D_{j}) \mathcal{S}_{\alpha}(t_{j}-t_{j-1}) E_{i-1}v_{i-1}+E_{k}v_{k}.
    \end{aligned}
\end{align}
\end{lemma}

\begin{proof}
For the interval \( [0, t_1] \), the mild solution of the fractional differential equation can be expressed using the variation of parameters formula for fractional systems:
\[
x(t)=\mathcal{S}_{\alpha}(t)x(0)+\int_{0}^{t}(t-s)^{\alpha-1}\mathcal{P}_{\alpha}(t-s)Bu(s)\,ds, \quad 0\leq t\leq t_{1}.
\]
In particular, at \( t = t_1^- \), we have
\[
x(t_1^-)=\mathcal{S}_{\alpha}(t_1)x(0)+\int_{0}^{t_1}(t_1-s)^{\alpha-1}\mathcal{P}_{\alpha}(t_1-s)Bu(s)\,ds.
\]

Using the impulsive condition \( \Delta x(t_1) = D_1 x(t_1) + E_1 v_1 \), we obtain
\begin{align*}
x(t_1^+) &= x(t_1^-) + D_1 x(t_1) + E_1 v_1 \\
&= (\mathcal{I} + D_1)\mathcal{S}_{\alpha}(t_1)x(0) 
   + (\mathcal{I} + D_1)\int_{0}^{t_1}(t_1-s)^{\alpha-1}\mathcal{P}_{\alpha}(t_1-s)Bu(s)\,ds 
   + E_1 v_1.
\end{align*}
This confirms the formula \eqref{eq:impulse_state} for \( k = 1 \).

Now, for \( t_1 < t \leq t_2 \), the solution is given by
\begin{align*}
x(t) &= \mathcal{S}_{\alpha}(t-t_1)x(t_1^+) + \int_{t_1}^{t}(t-s)^{\alpha-1}\mathcal{P}_{\alpha}(t-s)Bu(s)\,ds \\
&= \mathcal{S}_{\alpha}(t-t_1)(\mathcal{I} + D_1)\mathcal{S}_{\alpha}(t_1)x(0) \\
&+ \mathcal{S}_{\alpha}(t-t_1)(\mathcal{I} + D_1)\int_{0}^{t_1}(t_1-s)^{\alpha-1}\mathcal{P}_{\alpha}(t_1-s)Bu(s)\,ds \\
&+ \mathcal{S}_{\alpha}(t-t_1)E_1 v_1 + \int_{t_1}^{t}(t-s)^{\alpha-1}\mathcal{P}_{\alpha}(t-s)Bu(s)\,ds,
\end{align*}
which aligns with Lemma \ref{lem:solution_representation} for \( k = 1 \).

Assume that the formulas hold for some \( k = m \), i.e., for \( t_m < t \leq t_{m+1} \),
\begin{equation}\label{eq:induction_hypothesis}
x(t) = \mathcal{S}_{\alpha}(t-t_m) x(t_m^+) + \int_{t_m}^{t} (t-s)^{\alpha-1} \mathcal{P}_{\alpha}(t-s)Bu(s)\,ds,
\end{equation}
with \( x(t_m^+) \) given by \eqref{eq:impulse_state} with \( k = m \). At \( t = t_{m+1}^- \), we have
\[
x(t_{m+1}) = \mathcal{S}_{\alpha}(t_{m+1}-t_m) x(t_m^+) 
           + \int_{t_m}^{t_{m+1}} (t_{m+1}-s)^{\alpha-1} \mathcal{P}_{\alpha}(t_{m+1}-s)Bu(s)\,ds.
\]

Applying the impulsive condition at \( t_{m+1} \) yields
\begin{align*}
x(t_{m+1}^+) &= (\mathcal{I} + D_{m+1})x(t_{m+1}) + E_{m+1}v_{m+1} \\
&= (\mathcal{I} + D_{m+1})\mathcal{S}_{\alpha}(t_{m+1}-t_m) x(t_m^+) \\
& + (\mathcal{I} + D_{m+1})\int_{t_m}^{t_{m+1}} (t_{m+1}-s)^{\alpha-1} \mathcal{P}_{\alpha}(t_{m+1}-s)Bu(s)\,ds 
   + E_{m+1}v_{m+1}.
\end{align*}

Substituting the expression for \( x(t_m^+) \) from the induction hypothesis and simplifying the product terms leads to
\begin{align*}
x(t_{m+1}^+) &= \prod_{j=m+1}^{1}(\mathcal{I}+D_{j})\mathcal{S}_{\alpha}(t_{j}-t_{j-1})x_{0} \\
&+ \sum_{i=1}^{m+1}\prod_{j=m+1}^{i+1}(\mathcal{I}+D_{j})\mathcal{S}_{\alpha}(t_{j}-t_{j-1})
   (\mathcal{I}+D_{i})\int_{t_{i-1}}^{t_{i}}(t_{i}-s)^{\alpha-1}\mathcal{P}_{\alpha}(t_{i}-s)Bu(s)\,ds \\
&+ \sum_{i=2}^{m+1}\prod_{j=m+1}^{i}(\mathcal{I}+D_{j}) \mathcal{S}_{\alpha}(t_{j}-t_{j-1}) E_{i-1}v_{i-1} + E_{m+1}v_{m+1}.
\end{align*}

Consequently, for \( t_{m+1} < t \leq t_{m+2} \),
\[
x(t) = \mathcal{S}_{\alpha}(t-t_{m+1}) x(t_{m+1}^+) + \int_{t_{m+1}}^{t} (t-s)^{\alpha-1} \mathcal{P}_{\alpha}(t-s)Bu(s)\,ds,
\]
which completes the induction step. Thus, Lemma \ref{lem:solution_representation} holds for all \( k = 1,2,\dots,n \).
\end{proof}

Based on Lemma \ref{lem:solution_representation}, we can state a unified expression for the mild solution.

\begin{definition}
For \( t \in (t_k, t_{k+1}] \), \( k = 0,1,\dots,n \) (with \( t_0 = 0 \), \( t_{n+1} = b \)), the mild solution of system \eqref{eq:fractional_main} is given by
\begin{align*}
x(t) =
\begin{cases}
\displaystyle
\mathcal{S}_{\alpha}(t)x_0 + \int_0^t (t-s)^{\alpha-1} \mathcal{P}_{\alpha}(t-s)Bu(s)\,ds, \quad k = 0, \\[10pt]
\displaystyle
\mathcal{S}_{\alpha}(t-t_k) \prod_{j=k}^{1}(\mathcal{I}+D_j)\mathcal{S}_{\alpha}(t_j-t_{j-1})x_0 \\
\displaystyle + \mathcal{S}_{\alpha}(t-t_k) \sum_{i=1}^{k} \prod_{j=k}^{i+1}(\mathcal{I}+D_j)\mathcal{S}_{\alpha}(t_j-t_{j-1}) (\mathcal{I}+D_i) \\
\times\displaystyle\int_{t_{i-1}}^{t_i} (t_i-s)^{\alpha-1} \mathcal{P}_{\alpha}(t_i-s)Bu(s)\,ds \\[6pt]
\displaystyle  + \mathcal{S}_{\alpha}(t-t_k) \sum_{i=2}^{k} \prod_{j=k}^{i}(\mathcal{I}+D_j) \mathcal{S}_{\alpha}(t_j-t_{j-1}) E_{i-1}v_{i-1} + \mathcal{S}_{\alpha}(t-t_k)E_k v_k \\[6pt]
\displaystyle  + \int_{t_k}^{t} (t-s)^{\alpha-1} \mathcal{P}_{\alpha}(t-s)Bu(s)\,ds, \quad k = 1,2,\dots,n.
\end{cases}
\end{align*}
\end{definition}
Under an additional commutativity assumption between the impulse operators $ D_k$ and the solution operator $\mathcal{S}_\alpha$, the solution formula simplifies significantly.
\begin{lemma}\label{lem:commutative_case}
Assume that \( \mathcal{S}_{\alpha}(t-t_k) D_k = D_k \mathcal{S}_{\alpha}(t-t_k) \) for all \( t \in (t_k, t_{k+1}] \) and \( k = 1,2,\dots,n \). Then the mild solution of \eqref{eq:fractional_main} can be written as
\begin{align*}
x(t) =
\begin{cases}
\displaystyle
\mathcal{S}_{\alpha}(t)x_0 + \int_0^t (t-s)^{\alpha-1} \mathcal{P}_{\alpha}(t-s)Bu(s)\,ds, \quad k = 0, \\[10pt]
\displaystyle
\prod_{j=k}^{1}(\mathcal{I}+D_j)\mathcal{S}_{\alpha}(t)x_0 + \displaystyle\sum_{i=1}^{k} \prod_{j=k}^{i}(\mathcal{I}+D_j) \int_{t_{i-1}}^{t_i} (t-s)^{\alpha-1} \mathcal{P}_{\alpha}(t-s)Bu(s)\,ds \\[6pt]
\displaystyle  + \sum_{i=2}^{k} \prod_{j=k}^{i}(\mathcal{I}+D_j) \mathcal{S}_{\alpha}(t-t_{i-1}) E_{i-1}v_{i-1} + \mathcal{S}_{\alpha}(t-t_k)E_k v_k \\[6pt]
\displaystyle  + \int_{t_k}^{t} (t-s)^{\alpha-1} \mathcal{P}_{\alpha}(t-s)Bu(s)\,ds, \quad k = 1,2,\dots,n.
\end{cases}
\end{align*}
\end{lemma}

\begin{remark}
The commutativity condition \( \mathcal{S}_{\alpha}(t-t_k) D_k = D_k \mathcal{S}_{\alpha}(t-t_k) \) holds, for instance, when each \( D_k \) is a scalar multiple of the identity operator, i.e., \( D_k = d_k \mathcal{I} \) for some constant \( d_k \in \mathbb{R} \). This case frequently arises in applications where impulses correspond to uniform scaling of the state.
\end{remark}

\subsection{Adjoint Equation}

To study controllability, it is useful to consider the adjoint system associated with the homogeneous impulsive fractional equation (i.e., \eqref{eq:fractional_main} with \( B = 0 \) and \( E_k = 0 \)). The adjoint equation plays a crucial role in formulating the Gramian controllability and the associated resolvent condition.

Consider the homogeneous impulsive fractional system:
\begin{equation}\label{eq:homogeneous_impulsive}
    \begin{cases}
        {^{C}}D^{\alpha}_{t_k^+} x(t) = Ax(t), & t \in [0,b] \setminus \{t_1, \dots, t_n\}, \\
        \Delta x(t_{k}) = D_{k} x(t_{k}), & k = 1, \dots, n, \\
        x(0) = x_0.
    \end{cases}
\end{equation}

To derive the corresponding adjoint equation, we perform a formal integration by parts for the Caputo derivative. For a test function \( p(\cdot) \), we compute
\begin{align*}
    \int_{t_k}^{t_{k+1}} \langle {^{C}}D^{\alpha}_{t_k^+} x(t), p(t) \rangle \, dt 
    &= \int_{t_k}^{t_{k+1}} \langle {^{RL}}I^{1-\alpha}_{t_k^+} x'(t), p(t) \rangle \, dt \\
    &= \int_{t_k}^{t_{k+1}} \langle x'(t), {^{RL}}I^{1-\alpha}_{t_{k+1}^-} p(t) \rangle \, dt \\
    &= \langle x(t_{k+1}^-), {^{RL}}I^{1-\alpha}_{t_{k+1}^-} p(t_{k+1}^-) \rangle 
       - \langle x(t_k^+), {^{RL}}I^{1-\alpha}_{t_{k+1}^-} p(t_k^+) \rangle \\
    & + \int_{t_k}^{t_{k+1}} \langle x(t), {^{RL}}D^{\alpha}_{t_{k+1}^-} p(t) \rangle \, dt.
\end{align*}

Using equation \( {^{C}}D^{\alpha}_{t_k^+} x(t) = Ax(t) \), the left-hand side also equals
\[
 \int_{t_k}^{t_{k+1}} \langle {^{C}}D^{\alpha}_{t_k^+} x(t), p(t) \rangle \, dt =\int_{t_k}^{t_{k+1}} \langle x(t), A^* p(t) \rangle \, dt.
\]

Equating the two expressions and considering the impulsive conditions leads to the adjoint system:

\begin{equation}\label{eq:adjoint_system}
    \begin{cases}
        {^{RL}}D^{\alpha}_{t_{k+1}^-} p(t) = A^* p(t), \quad t \in (t_k, t_{k+1}), \qquad k=0,\dots,n,\\
        \Delta \left[ {^{RL}}I^{1-\alpha}_{t_{n-k+1}}p(t_{n-k+1})\right]=-D_{n-k+1}^* {^{RL}}I^{1-\alpha}_{t^+_{n-k+1}}  p(t_{n-k+1}^+), \qquad k =\overline{1,n}, \\
         {^{RL}}I^{1-\alpha}_{b^-} p(t)\Big\vert_{t=b} = \varphi.
    \end{cases}
\end{equation}

	\begin{lemma}\label{l8}
		The mild solution of the adjoint equation \eqref{eq:adjoint_system} is given by
		\begin{align}\label{45}
			\begin{cases}
p(t) = (b-t)^{\alpha-1}\mathcal{P}_{\alpha}^{*}(b-t)\varphi, \qquad
t_n < t \leq b, \\
p(t) = (t_k - t)^{\alpha-1}\mathcal{P}_{\alpha}^{*}(t_k - t)(\mathcal{I}+D_k^{*}) \\
\quad \times \prod_{i=k+1}^{n} (t_i - t_{i-1})^{\alpha-1}\mathcal{P}_{\alpha}^{*}(t_i - t_{i-1})(\mathcal{I}+D_i^{*})(b-t_n)^{\alpha-1}\mathcal{P}_{\alpha}^{*}(b-t_n)\varphi.
\end{cases}
		\end{align}
		
		where $t_{k-1}<t\leq t_{k}$, $k=n,\dots,1$.
	\end{lemma}
	\begin{proof}
		For $t_{n}<t\leq b$ the formula \eqref{45} is clear. By using induction, for $t_{n-1}<t\leq t_{n}$, we get
		\begin{align*}
			p(t)=&(t_n-t)^{\alpha-1}\mathcal{P}^{*}_{\alpha}(t_{n}-t){^{RL}}I^{1-\alpha}_{t^{-}_{n}}p(t^-_n)=(t_n-t)^{\alpha-1}\mathcal{P}^{*}_{\alpha}(t_{n}-t)(\mathcal{I}+D^{*}_{n}){^{RL}}I^{1-\alpha}_{t_n^{+}}p(t^{+}_{n})\\
			=&(t_n-t)^{\alpha-1}\mathcal{P}^{*}_{\alpha}(t_{n}-t)(\mathcal{I}+D^{*}_{n})(b-t_n)^{\alpha-1}\mathcal{P}^{*}_{\alpha}(b-t_{n})\varphi.
		\end{align*}
		Assume that, for $t_{k}<t\leq t_{k+1}$,
		\begin{align*}
			p(t)=&(t_{k+1}-t)^{\alpha-1}\mathcal{P}^{*}_{\alpha}(t_{k+1}-t)(\mathcal{I}+D^{*}_{k+1})\\
            \times&\prod_{i=k+2}^{n}(t_{i}-t_{i-1})^{\alpha-1}\mathcal{P}^{*}_{\alpha}(t_{i}-t_{i-1})(\mathcal{I}+D^{*}_{i})(b-t_{n})^{\alpha-1}\mathcal{P}^{*}_{\alpha}(b-t_{n})\varphi.
		\end{align*}
		Prove that for $t_{k-1}<t\leq t_{k}$,
		\begin{align*}
			p(t)=&(t_{k}-t)^{\alpha-1}\mathcal{P}^{*}_{\alpha}(t_{k}-t){^{RL}}I^{1-\alpha}_{t^{-}_{k}}p(t^{-}_{k})=(t_{k}-t)^{\alpha-1}\mathcal{P}^{*}_{\alpha}(t_{k}-t)(\mathcal{I}+D^{*}_{k}){^{RL}}I^{1-\alpha}_{t^{+}_{k}}p(t^{+}_{k})\\
			=&(t_{k}-t)^{\alpha-1}\mathcal{P}^{*}_{\alpha}(t_{k}-t)(\mathcal{I}+D^{*}_{k})\mathcal{P}^{*}_{\alpha}(t_{k+1}-t_{k})(\mathcal{I}+D^{*}_{k+1})\\
            \times &\prod_{i=k+2}^{n}(t_i-t_{i-1})^{\alpha-1}\mathcal{P}^{*}_{\alpha}(t_{i}-t_{i-1})(\mathcal{I}+D^{*}_{i})\mathcal{P}^{*}_{\alpha}(b-t_{n})\varphi\\
			=&(t_{k}-t)^{\alpha-1}\mathcal{P}^{*}_{\alpha}(t_{k}-t)(\mathcal{I}+D^{*}_{k})\\
            \times &\prod_{i=k+1}^{n}(t_i-t_{i-1})^{\alpha-1}\mathcal{P}^{*}_{\alpha}(t_{i}-t_{i-1})(\mathcal{I}+D^{*}_{i})(b-t_{n})^{\alpha-1}\mathcal{P}^{*}_{\alpha}(b-t_{n})\varphi.
		\end{align*}
	\end{proof}
	We now establish a Green-type formula that connects the solutions of the fractional impulsive system \eqref{eq:fractional_main} and its adjoint \eqref{eq:adjoint_system}. This formula plays a key role in the subsequent controllability analysis.

\begin{lemma}\label{lemma:green_formula}
For the mild solution \( x(t) \) of \eqref{eq:fractional_main} and the mild solution \( p(t) \) of the adjoint system \eqref{eq:adjoint_system}, the following identity holds:
\begin{equation}\label{eq:green_formula}
\begin{split}
&\quad\big\langle x(b^-), \varphi \big\rangle_H 
- \big\langle x_0,{^{RL}}I^{1-\alpha}_{t_1^-} p(0^+) \big\rangle_H\\ 
&= \sum_{k=0}^{n} \int_{t_k}^{t_{k+1}} \Big\langle u(s),\; 
   B^* (t_{k+1}-s)^{\alpha-1} \mathcal{P}_{\alpha}^*(t_{k+1}-s)\, \xi_{k+1} \Big\rangle_U ds \\
& + \sum_{k=1}^{n} \big\langle v_k,\; E_k^* \xi_{k-1} \big\rangle_U ,
\end{split}
\end{equation}
where
\begin{align}
\xi_{k+1} &= \prod_{j=k+1}^{n} \Big[ (I+D_j^*)\,(t_j-t_{j-1})^{\alpha-1} 
           \mathcal{P}_{\alpha}^*(t_j-t_{j-1}) \Big] 
           (b-t_n)^{\alpha-1} \mathcal{P}_{\alpha}^*(b-t_n) \varphi , \label{eq:xi_k} 
\end{align}
\end{lemma}

\begin{proof}
In every subinterval \( (t_k,t_{k+1}) \) we integrate by parts, using the adjoint relation between the Caputo derivative and the Riemann-Liouville integral of the right side:
\begin{align*}
\int_{t_k}^{t_{k+1}} \big\langle {}^{C}D^{\alpha}_{t_k^+} x(t),\, p(t) \big\rangle\, dt
&= \big\langle x(t_{k+1}^-),\,{^{RL}}I^{1-\alpha}_{t_{k+1}^-} p(t_{k+1}^-) \big\rangle
  - \big\langle x(t_k^+),\,{^{RL}}I^{1-\alpha}_{t_{k+1}^-} p(t_k^+) \big\rangle\\
 & + \int_{t_k}^{t_{k+1}} \big\langle x(t),\,{^{RL}}D^{\alpha}_{t_{k+1}^-} p(t) \big\rangle\, dt .
\end{align*}
Using \eqref{eq:fractional_main} and \eqref{eq:adjoint_system}, we have
\begin{align*}
    \int_{t_k}^{t_{k+1}} \big\langle Ax(t)+Bu(t),\, p(t) \big\rangle\, dt
&= \big\langle x(t_{k+1}^-),\,{^{RL}}I^{1-\alpha}_{t_{k+1}^-} p(t_{k+1}^-) \big\rangle
  - \big\langle x(t_k^+),\,{^{RL}}I^{1-\alpha}_{t_{k+1}^-} p(t_k^+) \big\rangle\\
 & + \int_{t_k}^{t_{k+1}} \big\langle x(t),\, A^*p(t) \big\rangle\, dt .
\end{align*}
Then, we get
\[
\big\langle x(t_{k+1}^-),\,{^{RL}}I^{1-\alpha}_{t_{k+1}^-} p(t_{k+1}^-) \big\rangle
- \big\langle x(t_k^+),\,{^{RL}}I^{1-\alpha}_{t_{k+1}^-} p(t_k^+) \big\rangle
= \int_{t_k}^{t_{k+1}} \big\langle u(t),\, B^* p(t) \big\rangle\, dt .
\]

Summation over all subintervals yields a telescoping sum:
\begin{align*}
&\big\langle x(b^-),\,{^{RL}}I^{1-\alpha}_{b^-} p(b^-) \big\rangle
- \big\langle x(0^+),\,{^{RL}}I^{1-\alpha}_{t_1^-} p(0^+) \big\rangle \\
+& \sum_{k=1}^{n} \Big[ \big\langle x(t_k^-),\,{^{RL}}I^{1-\alpha}_{t_k^-} p(t_k^-) \big\rangle
                     - \big\langle x(t_k^+),\,{^{RL}}I^{1-\alpha}_{t_k^-} p(t_k^+) \big\rangle \Big]\\
=& \sum_{k=0}^{n} \int_{t_k}^{t_{k+1}} \big\langle u(t),\, B^* p(t) \big\rangle\, dt .
\end{align*}

The impulsive jump condition \( x(t_k^+) = (I+D_k)x(t_k^-)+E_kv_k \) together with the adjoint jump condition \({^{RL}}I^{1-\alpha}_{t_k^-} p(t_k^-) = (I+D_k^*)\,{}^{RL}I^{1-\alpha}_{t_k^+} p(t_k^+) \) simplifies the terms at the impulse points:
\[
\big\langle x(t_k^-),\,{^{RL}}I^{1-\alpha}_{t_k^-} p(t_k^-) \big\rangle
- \big\langle x(t_k^+),\,{^{RL}}I^{1-\alpha}_{t_k^-} p(t_k^+) \big\rangle
= -\,\big\langle v_k,\; E_k^*\,{^{RL}}I^{1-\alpha}_{t_k^+} p(t_k^+) \big\rangle .
\]

Finally, inserting the explicit representation of the adjoint solution from Lemma \ref{l8},
\[
p(t) = (t_{k+1}-t)^{\alpha-1} \mathcal{P}_{\alpha}^*(t_{k+1}-t)\,\xi_{k+1} \qquad (t_k<t\le t_{k+1}),
\]
and using the terminal condition \({^{RL}}I^{1-\alpha}_{b^-} p(b^-)=\varphi \), we obtain precisely \eqref{eq:green_formula} with the coefficients \( \xi_{k+1} \) given by \eqref{eq:xi_k}.
\end{proof}

\subsubsection{Controllability of System \texorpdfstring{\eqref{eq:fractional_main}}{(main equation)}}
We now investigate the controllability properties of system \eqref{eq:fractional_main}. First, we introduce several definitions and preliminary lemmas that will be used in the analysis.
\begin{definition}\cite{3}
	Let \( H \) be a Hilbert space. A linear operator \( \mathcal{M}: H \to H \) is called positive if 
	\[
	\langle \mathcal{M} y, y \rangle \geq 0 \quad \text{for all } y \in H.
	\]
	It is said to be strictly positive if 
	\[
	\langle \mathcal{M} y, y \rangle > 0 \quad \text{for every non-zero } y \in H.
	\]
\end{definition}

For convenience, we use the notation 
\[
\Omega = (u(\cdot), \{v_{k}\}_{k=1}^{n}) \in L^{2}([0,b], U) \times U^{n}.
\]
The space \( L^{2}([0, b], U) \times U^{n} \) is a Hilbert space when endowed with the inner product \(\langle \cdot, \cdot \rangle_{1}\) defined by
\[
\big\langle \Omega_{1}, \Omega_{2} \big\rangle_{1} 
= \int_{0}^{b} \big\langle u_{1}(s), u_{2}(s) \big\rangle_{U} \, ds 
+ \sum_{k=1}^{n} \big\langle v_{1k}, v_{2k} \big\rangle_{U},
\]
for any \(\Omega_{1}, \Omega_{2} \in L^{2}([0,b], U) \times U^{n}\).

To investigate the controllability of an impulsive system, we introduce the bounded linear operator  
\[
\mathcal{M}: L^2([0, b], U) \times U^n \to H
\]
defined by  
\begin{align}\label{tu}
&\mathcal{M}(u(\cdot),\{v_{k}\}_{k=1}^{n})=\int_{t_{n}}^{b}(b-s)^{\alpha-1}\mathcal{P}_{\alpha}(b-s)Bu(s)ds\nonumber\\
&+\mathcal{S}_{\alpha}(b-t_{n})\sum_{i=1}^{n}\prod_{j=n}^{i+1}(\mathcal{I}+D_{j})\mathcal{S}_{\alpha}(t_{j}-t_{j-1})
(\mathcal{I}+D_{i})\int_{t_{i-1}}^{t_{i}}(t_{i}-s)^{\alpha-1}\mathcal{P}_{\alpha}(t_{i}-s)Bu(s)ds\nonumber\\
&+\mathcal{S}_{\alpha}(b-t_{n})\sum_{i=2}^{n}\prod_{j=n}^{i}(\mathcal{I}+D_{j}) \mathcal{S}_{\alpha}(t_{j}-t_{j-1}) E_{i-1}v_{i-1}+\mathcal{S}_{\alpha}(b-t_{n})E_{n}v_{n}.
\end{align}

The following lemma establishes the well-posedness of \(\mathcal{M}\) as an operator from \(L^2([0, b], U) \times U^n\) into \(H\).
\bigskip

\begin{lemma}
The operator \(\mathcal{M}: L^2([0, b], U) \times U^n \to H\) defined in \eqref{tu} is bounded and linear. Moreover, \(\mathcal{M}(u(\cdot), \{v_k\}_{k=1}^{n}) \in L^2([0, b], H)\) for every \(u(\cdot) \in L^2([0, b], U)\) and \(\{v_k\}_{k=1}^{n} \in U^n\).
\end{lemma}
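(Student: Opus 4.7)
The linearity of $\mathcal{M}$ is immediate from the definition \eqref{tu}: each summand is an integral or finite combination that is linear in $u$ and in the $v_k$'s separately, so $\mathcal{M}$ is linear on the product space. Hence the task reduces to producing a norm estimate of the form
\[
\|\mathcal{M}(u,\{v_k\}_{k=1}^n)\|_H \;\leq\; C\bigl(\|u\|_{L^2([0,T],U)} + \|\{v_k\}\|_{U^n}\bigr),
\]
after which the auxiliary claim $\mathcal{M}(u,\{v_k\})\in L^2([0,T],H)$ follows for free: $\mathcal{M}(u,\{v_k\})$ is a single element of $H$, hence a constant function of $t$, which trivially lies in $L^2([0,T],H)$ on a bounded interval.

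The plan for the boundedness estimate is to treat the four groups of summands in \eqref{tu} in turn, using as the main analytic input the operator bound $\|\mathcal{P}_\alpha(t)\|, \|\mathcal{S}_\alpha(t)\| \leq C t^{-\alpha(1+p)}$ from Proposition~\ref{ttt}, so that $(t-s)^{\alpha-1}\|\mathcal{P}_\alpha(t-s)\| \leq C(t-s)^{-1-\alpha p}$ with exponent $-1-\alpha p\in(-1,0)$. For the leading integral $\int_{t_n}^T(T-s)^{\alpha-1}\mathcal{P}_\alpha(T-s)Bu(s)\,ds$ I would apply the Bochner triangle inequality together with $\|B\|_{L(U,H)}$ and then Cauchy--Schwarz in $L^2$, producing a bound of the form $C\|B\|\|u\|_{L^2(t_n,T;U)}$. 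For the second (double-sum) block, the operator factors $\mathcal{S}_\alpha(T-t_n)\prod_{j=n}^{i+1}(\mathcal{I}+D_j)\mathcal{S}_\alpha(t_j-t_{j-1})(\mathcal{I}+D_i)$ are bounded constants because every $\mathcal{S}_\alpha$ is evaluated at a strictly positive fixed time $t_j-t_{j-1}>0$ where Proposition~\ref{ttt} gives a uniform bound; one then repeats the Cauchy--Schwarz estimate on each subinterval $(t_{i-1},t_i)$ and sums over $i$.

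The two remaining summands involving the $v_k$'s are entirely algebraic, consisting of finite products of bounded operators $\mathcal{S}_\alpha(t_j-t_{j-1})$, $(\mathcal{I}+D_j)$, $E_{i-1}$ applied to $v_{i-1}$, so they are controlled by $C\sum_{k=1}^n\|v_k\|_U$, hence by $C\|\{v_k\}\|_{U^n}$ after one more Cauchy--Schwarz in the discrete variable. Assembling the four estimates and using the definition of the product-space norm $\|\Omega\|_1^2=\|u\|_{L^2}^2+\sum_k\|v_k\|_U^2$ delivers the desired bound, completing boundedness and hence the lemma.

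The one point I expect to require care is the Cauchy--Schwarz step applied to the singular kernel $(t_i-s)^{-1-\alpha p}$. Its square $(t_i-s)^{-2(1+\alpha p)}$ is integrable on $(t_{i-1},t_i)$ only when $2(1+\alpha p)<1$, i.e.\ $\alpha p<-\tfrac12$; in the complementary regime $\alpha p\in[-\tfrac12,0)$ one cannot simply pair the kernel in $L^2$ with $u\in L^2$. The standard remedy is either to impose the tacit standing hypothesis $\alpha(1+p)<\tfrac12$ on the parameters, or to replace Cauchy--Schwarz with a Hölder estimate against a finer space for $u$; I expect the paper is implicitly in the first regime and would flag this as the only subtle ingredient in what is otherwise a mechanical chain of operator-norm estimates.
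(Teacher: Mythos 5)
Your proof is correct and follows essentially the same route as the paper: linearity by inspection, then a term-by-term norm estimate using the operator bounds and Cauchy--Schwarz, with the observation that the value of $\mathcal{M}$ is a single element of $H$ so the $L^2([0,T],H)$ claim is trivial. The one place where you genuinely diverge is the treatment of the singular kernel, and there your version is the more careful one. The paper simply asserts a uniform bound $\|\mathcal{P}_\alpha(T-s)\|\leq M$ and then only tracks the scalar factor $(T-s)^{\alpha-1}$, concluding that the Cauchy--Schwarz step works for $\tfrac12<\alpha<1$. But Proposition~\ref{ttt} only gives $\|\mathcal{P}_\alpha(t)\|\leq C_2 t^{-\alpha(1+p)}$ with $p\in(-1,0)$, which blows up as $t\to 0^+$, so no such uniform $M$ exists near $s=T$ and the paper's stated convergence condition undercounts the singularity. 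Your combined exponent $(t-s)^{-1-\alpha p}$ and the resulting integrability requirement $2(1+\alpha p)<1$ is the honest condition, and flagging it as a needed standing hypothesis (or as the point where one must trade Cauchy--Schwarz for a H\"older estimate) is exactly right; the paper's proof as written glosses over this.
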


\begin{proof}
\textbf{(i)\, Linearity:}
Let \((u_1(\cdot), \{v_k^1\}_{k=1}^{n})\) and \((u_2(\cdot), \{v_k^2\}_{k=1}^{n})\) be arbitrary elements of \(L^2([0, b], U) \times U^n\), and let \(\lambda \in \mathbb{R}\). By the linearity of the integral, the linearity of the operators \(\mathcal{P}_{\alpha}\), \(\mathcal{S}_{\alpha}\), \(B\), \(D_j\), and \(E_j\), and the distributive property of summation, it follows directly that  
\[
\mathcal{M}(u_1 + \lambda u_2, \{v_k^1 + \lambda v_k^2\}_{k=1}^{n}) = \mathcal{M}(u_1, \{v_k^1\}_{k=1}^{n}) + \lambda \mathcal{M}(u_2, \{v_k^2\}_{k=1}^{n}).
\]
Thus, \(\mathcal{M}\) is a linear operator.

\textbf{(ii)\, Boundedness: } 
We now establish the boundedness of \(\mathcal{M}\). Let \(M, C_B, C_D, C_E\) denote positive constants such that  
\[
\|\mathcal{P}_{\alpha}(t)\| \leq \frac{M}{\Gamma(\alpha)}, \quad \|\mathcal{S}_{\alpha}(t)\| \leq M, \quad \|B\| \leq C_B, \quad \|D_j\| \leq C_D, \quad \|E_j\| \leq C_E
\]
for all \(t \in [0, b]\) and \(j = 1, \dots, n\). Using these bounds and applying Lemma \ref{lem:solution_op_properties}, we obtain  
\begin{align*}
\|\mathcal{M}(u(\cdot),\{v_{k}\}_{k=1}^{n})\|_{H} &\leq \frac{MC_{B}}{\Gamma(\alpha)}\int_{t_{n}}^{b}(b-s)^{\alpha-1}\| u(s)\|_{U}\,ds \\
&+ \sum_{i=1}^n (1+C_D)^{n-i+1}M^{n-i+2}\frac{C_B}{\Gamma(\alpha)}\int_{t_{i-1}}^{t_i}(t_i-s)^{\alpha-1}\| u(s)\|_{U}\, ds \\
&+ \sum_{i=1}^n (1+C_D)^{n-i}M^{n-i+1}C_E \| v_i\|_{U}.
\end{align*}
Let \(t_0 = 0\) and \(t_{n+1} = b\). Then the inequality above can be rewritten as  
\[
\|\mathcal{M}(u(\cdot),\{v_{k}\}_{k=1}^{n})\|_{H} \leq C_1 \sum_{i=1}^{n+1} \int_{t_{i-1}}^{t_i}(t_i-s)^{\alpha-1}\| u(s)\|_{U}\, ds + C_2 \sum_{i=1}^{n} \| v_i\|_{U},
\]
where \(C_1, C_2 > 0\) are appropriate constants. Applying the Cauchy--Schwarz inequality to each integral term yields  
\[
\int_{t_{i-1}}^{t_i}(t_i-s)^{\alpha-1}\| u(s)\|_{U}\, ds \leq \left( \int_{t_{i-1}}^{t_i}(t_i-s)^{2\alpha-2}\, ds \right)^{\frac{1}{2}} \| u \|_{L^2([t_{i-1},t_i],U)}.
\]
Since  
\[
\int_{t_{i-1}}^{t_i}(t_i-s)^{2\alpha-2}\, ds = \frac{(t_i - t_{i-1})^{2\alpha-1}}{2\alpha-1} \leq \frac{b^{2\alpha-1}}{2\alpha-1},
\]
we obtain  
\[
\|\mathcal{M}(u(\cdot),\{v_{k}\}_{k=1}^{n})\|_{H} \leq C_1 \frac{b^{\alpha-\frac{1}{2}}}{\sqrt{2\alpha-1}} \sum_{i=1}^{n+1} \| u \|_{L^2([t_{i-1},t_i],U)} + C_2 \sum_{i=1}^{n} \| v_i\|_{U}.
\]
Finally, using the fact that  
\[
\sum_{i=1}^{n+1} \| u \|_{L^2([t_{i-1},t_i],U)} \leq(n+1) \, \| u \|_{L^2([0,b],U)},
\]
we conclude that there exists a constant \(\tilde{C} > 0\) such that  
\[
\|\mathcal{M}(u(\cdot),\{v_{k}\}_{k=1}^{n})\|_{H} \leq \tilde{C} \left( \| u \|_{L^2([0,b],U)} + \sum_{k=1}^{n} \| v_k\|_{U} \right).
\]
Hence, \(\mathcal{M}\) is a bounded linear operator. Since \(\mathcal{M}(u(\cdot), \{v_k\}_{k=1}^{n})\) belongs to \(H\) for each input pair, the boundedness ensures that the operator maps into \(L^2([0, b], H)\), completing the proof.
\end{proof}

\begin{lemma}\label{lemmaadj}
The adjoint operator \(\mathcal{M}^*\) is given by
\[
\mathcal{M}^*\varphi = \bigl( B^*p(\cdot),\; \{ E_k^*p(t_k^+)\}_{k=1}^n \bigr),
\]
where
\[
B^*p(t)=
\begin{cases}
\begin{aligned}
&B^*(b-t)^{\alpha-1}\mathcal{P}_{\alpha}^*(b-t)\varphi, & t_n<t\le b,\\
& B^*(t_k-t)^{\alpha-1}\mathcal{P}_{\alpha}^*(t_k-t)(\mathcal{I}+D_k^*) \prod_{i=k+1}^{n}(t_i-t_{i-1})^{\alpha-1} \mathcal{P}_{\alpha}^*(t_i-t_{i-1}) \\
&(\mathcal{I}+D_i^*)\,(b-t_n)^{\alpha-1}\mathcal{P}_{\alpha}^*(b-t_n)\varphi, & t_{k-1}<t\le t_k,
\end{aligned}
\end{cases}
\]
and
\[
E_k^*p(t_k^+)=
\begin{cases}
\begin{aligned}
&E_n^*(b-t_n)^{\alpha-1}\mathcal{P}_{\alpha}^*(b-t_n)\varphi, \quad k=n,\\
& E_k^*(t_k-t_{k-1})^{\alpha-1}\mathcal{P}_{\alpha}^*(t_k-t_{k-1})(\mathcal{I}+D_k^*) \prod_{i=k+1}^{n}(t_i-t_{i-1})^{\alpha-1} \mathcal{P}_{\alpha}^*(t_i-t_{i-1}) \\
&(\mathcal{I}+D_i^*)\,(b-t_n)^{\alpha-1}\mathcal{P}_{\alpha}^*(b-t_n)\varphi, \quad k=n-1,\dots,1.
\end{aligned}
\end{cases}
\]
\end{lemma}

\begin{proof}
Substituting \(x(0)=0\) into the Green-type formula \eqref{eq:green_formula} we obtain
\begin{align*}
    \langle x(b),\varphi\rangle
= \bigl\langle \Omega,\; \mathcal{M}^*p(b)\bigr\rangle_1
&= \sum_{k=0}^{n} \int_{t_k}^{t_{k+1}} \Bigl\langle u(s),\; 
B^* (t_{k+1}-s)^{\alpha-1} \mathcal{P}_{\alpha}^*(t_{k+1}-s)\, \xi_{k+1} \Bigr\rangle_U\,ds\\
&+ \sum_{k=1}^{n} \bigl\langle v_k,\; E_k^* \xi_{k-1} \bigr\rangle_U,
\end{align*}

with \(\xi_k\) defined as in \eqref{eq:xi_k}, respectively. Consequently,
\begin{align*}
&\qquad\langle x(b),\varphi\rangle
= \Bigl\langle \int_{t_n}^{b}(b-s)^{\alpha-1}\mathcal{P}_{\alpha}(b-s)Bu(s)\,ds,\;\varphi\Bigr\rangle \\
&+ \Bigl\langle \mathcal{S}_{\alpha}(b-t_n) \sum_{i=1}^{n}\prod_{j=n}^{i+1}(\mathcal{I}+D_j)\mathcal{S}_{\alpha}(t_j-t_{j-1})
      (\mathcal{I}+D_i)\int_{t_{i-1}}^{t_i}(t_i-s)^{\alpha-1}\mathcal{P}_{\alpha}(t_i-s)Bu(s)\,ds,\;\varphi\Bigr\rangle \\
&+ \Bigl\langle \mathcal{S}_{\alpha}(b-t_n)\sum_{i=2}^{n}\prod_{j=n}^{i}(\mathcal{I}+D_j)\mathcal{S}_{\alpha}(t_j-t_{j-1})E_{i-1}v_{i-1},\;\varphi\Bigr\rangle
   + \bigl\langle \mathcal{S}_{\alpha}(b-t_n)E_nv_n,\;\varphi\bigr\rangle \\
&= \int_{t_n}^{b} \bigl\langle (b-s)^{\alpha-1}\mathcal{P}_{\alpha}(b-s)Bu(s),\;\varphi\bigr\rangle\,ds \\
&+ \sum_{i=1}^{n} \int_{t_{i-1}}^{t_i} \Bigl\langle (t_i-s)^{\alpha-1}\mathcal{P}_{\alpha}(t_i-s)Bu(s),\; 
      (\mathcal{I}+D_i^*)\prod_{j=i+1}^{n}\mathcal{S}_{\alpha}^*(t_j-t_{j-1})(\mathcal{I}+D_j^*)\mathcal{S}_{\alpha}^*(b-t_n)\varphi\Bigr\rangle\,ds \\
&+ \sum_{i=2}^{n} \Bigl\langle v_{i-1},\; E_{i-1}^*\prod_{j=i}^{n}\mathcal{S}_{\alpha}^*(t_j-t_{j-1})(\mathcal{I}+D_j^*)\mathcal{S}_{\alpha}^*(b-t_n)\varphi\Bigr\rangle
   + \bigl\langle v_n,\; E_n^*\mathcal{S}_{\alpha}^*(b-t_n)\varphi\bigr\rangle \\
&= \int_{t_n}^{b} \bigl\langle u(s),\; B^*(b-s)^{\alpha-1}\mathcal{P}_{\alpha}^*(b-s)\varphi\bigr\rangle\,ds \\
&+ \sum_{i=1}^{n} \int_{t_{i-1}}^{t_i} \Bigl\langle u(s),\; B^*(t_i-s)^{\alpha-1}\mathcal{P}_{\alpha}^*(t_i-s)(\mathcal{I}+D_i^*)
      \prod_{j=i+1}^{n}\mathcal{S}_{\alpha}^*(t_j-t_{j-1})(\mathcal{I}+D_j^*)\mathcal{S}_{\alpha}^*(b-t_n)\varphi\Bigr\rangle\,ds \\
&+ \sum_{i=2}^{n} \Bigl\langle v_{i-1},\; E_{i-1}^*\prod_{j=i}^{n}\mathcal{S}_{\alpha}^*(t_j-t_{j-1})(\mathcal{I}+D_j^*)\mathcal{S}_{\alpha}^*(b-t_n)\varphi\Bigr\rangle + \bigl\langle v_n,\; E_n^*\mathcal{S}_{\alpha}^*(b-t_n)\varphi\bigr\rangle.
\end{align*}
\end{proof}
	\begin{lemma}
		The operator $\mathcal{M}\mathcal{M}^{*}$ has the following form
		\begin{align*}
\mathcal{M}\mathcal{M}^{*}=\Omega_{t_{n}}^{b}+\Psi^{t_{n}}_{0}+\tilde{\Omega}_{t_{n}}^{b}+\tilde{\Psi}^{t_{n}}_{0},
		\end{align*}
		where $\Omega_{t_{n}}^{b},\Psi^{t_{n}}_{0}, \tilde{\Omega}_{t_{n}}^{b},\tilde{\Psi}^{t_{n}}_{0}: H\to H$ are non-negative operators and define as follows
		\begin{align*}
			\Omega_{t_{n}}^{b}:=&\int_{t_{n}}^{b}(b-s)^{2\alpha-2}\mathcal{P}_{\alpha}(b-s)BB^{*}\mathcal{P}^{*}_{\alpha}(b-s)\varphi ds,
		\end{align*}
		\begin{align*}
			\tilde{\Omega}_{t_{n}}^{b}:=&\mathcal{S}_{\alpha}(b-t_{n})E_{n}E^{*}_{n}\mathcal{S}^{*}_{\alpha}(b-t_{n})\varphi,
		\end{align*}
		\begin{align*}
			\Psi^{t_{n}}_{0}&:=\mathcal{S}_{\alpha}(b-t_{n})\sum_{i=1}^{n}\prod_{j=n}^{i+1}(\mathcal{I}+D_{j})\mathcal{S}_{\alpha}(t_{j}-t_{j-1})
			(\mathcal{I}+D_{i})\\
            &\times \int_{t_{i-1}}^{t_{i}}(t_i-s)^{2\alpha-2}\mathcal{P}_{\alpha}(t_{i}-s)BB^{*}\mathcal{P}^{*}_{\alpha}(t_{i}-s)\,ds\nonumber\\
			&\times (\mathcal{I}+D^{*}_{i})\prod_{j=i+1}^{n}
			\mathcal{S}^{*}_{\alpha}(t_{j}-t_{j-1})(\mathcal{I}+D^{*}_{j})\mathcal{S}^{*}_{\alpha}(b-t_{n})\varphi,
		\end{align*}
		
		\begin{align*}
			\tilde{\Psi}^{t_{n}}_{0}:=&\mathcal{S}_{\alpha}(b-t_{n})\sum_{i=2}^{n}\prod_{j=n}^{i}(\mathcal{I}+D_{j}) \mathcal{S}_{\alpha}(t_{j}-t_{j-1}) E_{i-1}\\
            \times &E^{*}_{i-1}\prod_{j=i}^{n} \mathcal{S}^{*}_{\alpha}(t_{j}-t_{j-1}) (\mathcal{I}+D^{*}_{j})\mathcal{S}^{*}_{\alpha}(b-t_{n})\varphi.
		\end{align*}
	\end{lemma}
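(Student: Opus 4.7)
The plan is to prove the lemma by direct computation: apply $\mathcal{M}$ to the image $\mathcal{M}^{\ast}\varphi$ described in the previous lemma, simplify term-by-term using the identity $\mathcal{C}_{\alpha}(t)=t^{\alpha-1}\mathcal{P}_{\alpha}(t)$, and recognize the four pieces that emerge as $\Omega_{0}^{t_{n}}$, $\Psi_{0}^{t_{n}}$, $\tilde{\Omega}_{0}^{t_{n}}$, and $\tilde{\Psi}_{0}^{t_{n}}$. Explicitly, the previous lemma gives $\mathcal{M}^{\ast}\varphi=(u(\cdot),\{v_{k}\}_{k=1}^{n})$, where $u(s)=B^{\ast}p(s)$ is piecewise defined depending on whether $s\in(t_{n},T]$ or $s\in(t_{k-1},t_{k}]$, and $v_{k}=E_{k}^{\ast}p(t_{k}^{+})$. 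Substituting this pair into the four-block formula \eqref{tu} will produce exactly four groups of terms corresponding to the four summands in the statement.

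First I would handle the free-control piece on $(t_{n},T]$: plugging $u(s)=B^{\ast}\mathcal{C}^{\ast}_{\alpha}(T-s)\varphi$ into $\int_{t_{n}}^{T}(T-s)^{\alpha-1}\mathcal{P}_{\alpha}(T-s)Bu(s)\,ds$ and using $(T-s)^{\alpha-1}\mathcal{P}_{\alpha}(T-s)=\mathcal{C}_{\alpha}(T-s)$ immediately yields $\Omega_{0}^{t_{n}}\varphi$. Next, for the second block of \eqref{tu}, on each subinterval $(t_{i-1},t_{i}]$ I would substitute the corresponding piecewise formula for $B^{\ast}p(s)$, namely $B^{\ast}\mathcal{C}_{\alpha}^{\ast}(t_{i}-s)(\mathcal{I}+D_{i}^{\ast})\prod_{j=i+1}^{n}\mathcal{S}^{\ast}_{\alpha}(t_{j}-t_{j-1})(\mathcal{I}+D_{j}^{\ast})\mathcal{S}^{\ast}_{\alpha}(T-t_{n})\varphi$; pulling the adjoint chain out of the $s$-integral and converting the kernel $(t_{i}-s)^{\alpha-1}\mathcal{P}_{\alpha}(t_{i}-s)BB^{\ast}\mathcal{C}^{\ast}_{\alpha}(t_{i}-s)$ into $\mathcal{C}_{\alpha}(t_{i}-s)BB^{\ast}\mathcal{C}^{\ast}_{\alpha}(t_{i}-s)$ reproduces precisely $\Psi_{0}^{t_{n}}\varphi$.

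For the impulsive-control pieces I would use $v_{k}=E_{k}^{\ast}p(t_{k}^{+})$ and the two cases from the previous lemma. The terminal term $\mathcal{S}_{\alpha}(T-t_{n})E_{n}v_{n}$ becomes $\mathcal{S}_{\alpha}(T-t_{n})E_{n}E_{n}^{\ast}\mathcal{S}^{\ast}_{\alpha}(T-t_{n})\varphi=\tilde{\Omega}_{0}^{t_{n}}\varphi$. The summation $\mathcal{S}_{\alpha}(T-t_{n})\sum_{i=2}^{n}\prod_{j=n}^{i}(\mathcal{I}+D_{j})\mathcal{S}_{\alpha}(t_{j}-t_{j-1})E_{i-1}v_{i-1}$, after inserting $v_{i-1}=E_{i-1}^{\ast}\prod_{j=i}^{n}\mathcal{S}^{\ast}_{\alpha}(t_{j}-t_{j-1})(\mathcal{I}+D_{j}^{\ast})\mathcal{S}^{\ast}_{\alpha}(T-t_{n})\varphi$, collapses to $\tilde{\Psi}_{0}^{t_{n}}\varphi$. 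Collecting the four pieces gives the claimed decomposition. Non-negativity of each summand is then immediate from its factored form $R R^{\ast}$: for example, $\Omega_{0}^{t_{n}}=\int_{t_{n}}^{T}\mathcal{C}_{\alpha}(T-s)B(\mathcal{C}_{\alpha}(T-s)B)^{\ast}ds$, so $\langle\Omega_{0}^{t_{n}}\varphi,\varphi\rangle=\int_{t_{n}}^{T}\|B^{\ast}\mathcal{C}^{\ast}_{\alpha}(T-s)\varphi\|^{2}_{U}\,ds\geq 0$, and analogous arguments apply to the other three operators.

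The main obstacle will be purely bookkeeping: the products $\prod_{j=n}^{i+1}(\mathcal{I}+D_{j})\mathcal{S}_{\alpha}(t_{j}-t_{j-1})$ in the forward operator and $\prod_{j=i+1}^{n}\mathcal{S}^{\ast}_{\alpha}(t_{j}-t_{j-1})(\mathcal{I}+D_{j}^{\ast})$ in the adjoint are written in opposite orders, so one must verify that upon taking adjoints the two chains match up consistently around each middle factor $(\mathcal{I}+D_{i})\int_{t_{i-1}}^{t_{i}}\mathcal{C}_{\alpha}(t_{i}-s)BB^{\ast}\mathcal{C}^{\ast}_{\alpha}(t_{i}-s)ds\,(\mathcal{I}+D_{i}^{\ast})$ and $E_{i-1}E_{i-1}^{\ast}$. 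Once the index alignment between the forward and reversed products is checked (using the fact that the adjoint of a composition reverses the order), the rest of the argument reduces to linearity of the integral and pulling constants through the inner product.
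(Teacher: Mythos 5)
Your proposal follows exactly the paper's own argument: substitute the explicit form of $\mathcal{M}^{*}\varphi$ from the preceding lemma into the four blocks of \eqref{tu}, absorb $(t_{i}-s)^{\alpha-1}\mathcal{P}_{\alpha}(t_{i}-s)$ into $\mathcal{C}_{\alpha}(t_{i}-s)$, and identify the resulting four summands. Your added remark that non-negativity follows from the $RR^{*}$ factorization is a slightly more explicit justification than the paper's ``clearly non-negative,'' but the route is the same.
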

	\begin{proof}
		In fact
		\begin{align*}
			&\mathcal{M}\mathcal{M}^{*}\varphi=\mathcal{M}\left(B^{*}p(\cdot), \big\{E^{*}_{k}p(t^{+}_{k})\big\}_{k=1}^{n}\right)\\
			&=\int_{t_{n}}^{b}(b-s)^{2\alpha-2}\mathcal{P}_{\alpha}(b-s)BB^{*}\mathcal{P}^{*}_{\alpha}(b-s)\varphi ds\\
            &+\mathcal{S}_{\alpha}(b-t_{n})\sum_{i=1}^{n}\prod_{j=n}^{i+1}(\mathcal{I}+D_{j})\mathcal{S}_{\alpha}(t_{j}-t_{j-1})
			(\mathcal{I}+D_{i})\\
			&\times\int_{t_{i-1}}^{t_{i}}(t_{i}-s)^{2\alpha-2}\mathcal{P}_{\alpha}(t_{i}-s)BB^{*}\mathcal{P}^{*}_{\alpha}(t_{i}-s)\,ds\\
            &\times (\mathcal{I}+D^{*}_{i})\prod_{j=i+1}^{n}
			\mathcal{S}^{*}_{\alpha}(t_{j}-t_{j-1})(\mathcal{I}+D^{*}_{j})\mathcal{S}^{*}_{\alpha}(b-t_{n})\varphi\\
			&+\mathcal{S}_{\alpha}(b-t_{n})\sum_{i=2}^{n}\prod_{j=n}^{i}(\mathcal{I}+D_{j}) \mathcal{S}_{\alpha}(t_{j}-t_{j-1}) E_{i-1}\\
            &\times E^{*}_{i-1}\prod_{j=i}^{n} \mathcal{S}^{*}_{\alpha}(t_{j}-t_{j-1}) (\mathcal{I}+D^{*}_{j})\mathcal{S}^{*}_{\alpha}(b-t_{n})\varphi\nonumber\\
			&+\mathcal{S}_{\alpha}(b-t_{n})E_{n}E^{*}_{n}\mathcal{S}^{*}_{\alpha}(b-t_{n})\varphi\nonumber\\
			&=\int_{t_{n}}^{b}(b-s)^{2\alpha-2}\mathcal{P}_{\alpha}(b-s)BB^{*}\mathcal{P}^{*}_{\alpha}(b-s)\varphi \,ds\\
            &+\mathcal{S}_{\alpha}(b-t_{n})\sum_{i=1}^{n}\prod_{j=n}^{i+1}(\mathcal{I}+D_{j})\mathcal{S}_{\alpha}(t_{j}-t_{j-1})
			(\mathcal{I}+D_{i})\nonumber\\
			&\times\int_{t_{i-1}}^{t_{i}}(t_i-s)^{2\alpha-2}\mathcal{P}_{\alpha}(t_{i}-s)BB^{*}\mathcal{P}^{*}_{\alpha}(t_{i}-s)\,ds\\
            &\times (\mathcal{I}+D^{*}_{i})\prod_{j=i+1}^{n}
			\mathcal{S}^{*}_{\alpha}(t_{j}-t_{j-1})(\mathcal{I}+D^{*}_{j})\mathcal{S}^{*}_{\alpha}(b-t_{n})\varphi\\
			&+\mathcal{S}_{\alpha}(b-t_{n})\sum_{i=2}^{n}\prod_{j=n}^{i}(\mathcal{I}+D_{j}) \mathcal{S}_{\alpha}(t_{j}-t_{j-1}) E_{i-1}\\
            &\times E^{*}_{i-1}\prod_{j=i}^{n} \mathcal{S}^{*}_{\alpha}(t_{j}-t_{j-1}) (\mathcal{I}+D^{*}_{j})\mathcal{S}^{*}_{\alpha}(b-t_{n})\varphi\\
			&+\mathcal{S}_{\alpha}(b-t_{n})E_{n}E^{*}_{n}\mathcal{S}^{*}_{\alpha}(b-t_{n})\varphi.
		\end{align*}
		Clearly, $\Omega_{t_{n}}^{b},\Psi^{t_{n}}_{0}, \tilde{\Omega}_{t_{n}}^{b},\tilde{\Psi}^{t_{n}}_{0}: H\to H$ are non-negative.
	\end{proof}
	\begin{remark}
		In the non-impulsive case, the expression for \(\mathcal{M}\mathcal{M}^{*}\) simplifies to
		\begin{align*}
			\mathcal{M}\mathcal{M}^{*} = \Omega^{b}_{0} := \int_{0}^{b} (b-s)^{2\alpha-2}\mathcal{P}_{\alpha}(b-s)BB^{*}\mathcal{P}^{*}_{\alpha}(b-s)\varphi \, ds.
		\end{align*}
		In this scenario, there is only one controllability operator.
	\end{remark}
	\begin{definition}
		The system \eqref{eq:fractional_main} is said to be approximately controllable on $[0, b]$
		if $\overline{Im \mathcal{M}} =H$.
	\end{definition} 
	\begin{theorem}\label{t13}
		The following conditions are equivalent.
		\\
		(i) System \eqref{eq:fractional_main} is approximately controllable on $[0,b]$.
		\\
		(ii) $\mathcal{M}^{*}\varphi=0$ implies that $\varphi=0$.
		\\
		(iii) $\Omega_{t_{n}}^{b}+\Psi^{t_{n}}_{0}+\tilde{\Omega}_{t_{n}}^{b}+\tilde{\Psi}^{t_{n}}_{0}$ is strictly positive.
		\\
		(iv) \[
		\varepsilon \left( \varepsilon I + \Omega_{t_{n}}^{b}+\Psi^{t_{n}}_{0}+\tilde{\Omega}_{t_{n}}^{b}+\tilde{\Psi}^{t_{n}}_{0} \right)^{-1}
		\]
		converges to the zero operator as \( \varepsilon \rightarrow 0^+ \) in the strong operator topology.
		\\
		(v) \( \varepsilon \left( \varepsilon \mathcal{I} + \Omega_{t_{n}}^{b}+\Psi^{t_{n}}_{0}+\tilde{\Omega}_{t_{n}}^{b}+\tilde{\Psi}^{t_{n}}_{0} \right)^{-1} \)
		converges to the zero operator as \( \varepsilon \rightarrow 0^+ \) in the weak operator topology.
	\end{theorem}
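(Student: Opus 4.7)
The plan is to establish the standard cycle (i) $\Leftrightarrow$ (ii) $\Leftrightarrow$ (iii), followed by (iii) $\Rightarrow$ (iv) $\Rightarrow$ (v) $\Rightarrow$ (iii), which closes the loop and delivers all five equivalences simultaneously.

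For (i) $\Leftrightarrow$ (ii), I would invoke the classical Hilbert-space identity $\overline{\mathrm{Im}\,\mathcal{M}}=(\ker\mathcal{M}^{*})^{\perp}$; approximate controllability, i.e.\ $\overline{\mathrm{Im}\,\mathcal{M}}=H$, is then equivalent to $\ker\mathcal{M}^{*}=\{0\}$. For (ii) $\Leftrightarrow$ (iii) I would exploit the quadratic-form identity
\begin{align*}
\big\langle \mathcal{M}\mathcal{M}^{*}\varphi,\varphi\big\rangle_{H}=\big\langle \mathcal{M}^{*}\varphi,\mathcal{M}^{*}\varphi\big\rangle_{1}=\|\mathcal{M}^{*}\varphi\|_{1}^{2},
\end{align*}
together with the decomposition $\mathcal{M}\mathcal{M}^{*}=\Omega^{t_{n}}_{0}+\Psi^{t_{n}}_{0}+\tilde{\Omega}^{t_{n}}_{0}+\tilde{\Psi}^{t_{n}}_{0}$ established in the preceding lemma. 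Since the sum of non-negative self-adjoint operators is non-negative and self-adjoint, strict positivity of the sum is precisely $\|\mathcal{M}^{*}\varphi\|_{1}^{2}>0$ for all $\varphi\neq 0$, i.e.\ (ii).

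Next, set $\Gamma:=\Omega^{t_{n}}_{0}+\Psi^{t_{n}}_{0}+\tilde{\Omega}^{t_{n}}_{0}+\tilde{\Psi}^{t_{n}}_{0}$, a bounded non-negative self-adjoint operator. Since $\varepsilon I+\Gamma\geq\varepsilon I$, the inverse exists and $\|\varepsilon(\varepsilon I+\Gamma)^{-1}\|\leq 1$. Using the spectral resolution $\Gamma=\int_{0}^{\|\Gamma\|}\lambda\,dE_{\lambda}$, one has $\varepsilon(\varepsilon I+\Gamma)^{-1}=\int_{0}^{\|\Gamma\|}\tfrac{\varepsilon}{\varepsilon+\lambda}\,dE_{\lambda}$. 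Strict positivity (iii) is equivalent to $E(\{0\})=0$, so $\tfrac{\varepsilon}{\varepsilon+\lambda}\to 0$ pointwise on the spectral support and dominated convergence applied to $\|\varepsilon(\varepsilon I+\Gamma)^{-1}x\|^{2}=\int_{0}^{\|\Gamma\|}\tfrac{\varepsilon^{2}}{(\varepsilon+\lambda)^{2}}\,d\langle E_{\lambda}x,x\rangle$ yields the strong-operator limit (iv). The implication (iv) $\Rightarrow$ (v) is automatic. For (v) $\Rightarrow$ (iii), if $\Gamma\varphi=0$ then $(\varepsilon I+\Gamma)\varphi=\varepsilon\varphi$, so $\varphi=\varepsilon(\varepsilon I+\Gamma)^{-1}\varphi$ for every $\varepsilon>0$; taking the weak limit forces $\varphi=0$, and combined with non-negativity this is strict positivity.

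The main delicate point is (ii) $\Leftrightarrow$ (iii): it is tempting to read off strict positivity from the individual summands, but only the total $\mathcal{M}\mathcal{M}^{*}$ produces the clean quadratic form $\|\mathcal{M}^{*}\varphi\|_{1}^{2}$, so the argument must be routed through this identity rather than term by term. Everything else reduces to classical functional analysis for non-negative bounded self-adjoint operators, i.e.\ the Bashirov--Mahmudov resolvent condition.
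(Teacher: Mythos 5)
Your proof is correct and closes the full cycle of equivalences, but it takes a genuinely different route from the paper for the resolvent conditions (iv)--(v). The paper handles (i) $\Leftrightarrow$ (ii) exactly as you do, delegates (i) $\Leftrightarrow$ (iii) to a citation (Zabczyk), and then establishes (i) $\Leftrightarrow$ (iv) by a variational argument: it minimizes the functional $\mathcal{J}_{\varepsilon}(\varphi)=\tfrac{1}{2}\Vert \mathcal{M}^{*}\varphi\Vert^{2}+\tfrac{\varepsilon}{2}\Vert\varphi\Vert^{2}-\langle \varphi, h-\cdots\rangle$, reads off the unique minimizer $\tilde{\varphi}_{\varepsilon}=\left(\varepsilon I+\Gamma\right)^{-1}\left(h-\cdots\right)$ from the Euler--Lagrange equation, and constructs explicit controls $u_{\varepsilon}$, $\{v^{k}_{\varepsilon}\}$ for which $x_{\varepsilon}(T)-h=-\varepsilon\left(\varepsilon I+\Gamma\right)^{-1}\left(h-\cdots\right)$, so that (iv) becomes the statement that these controls steer the state arbitrarily close to any target. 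You instead prove (ii) $\Leftrightarrow$ (iii) directly from the identity $\langle \mathcal{M}\mathcal{M}^{*}\varphi,\varphi\rangle=\Vert\mathcal{M}^{*}\varphi\Vert_{1}^{2}$ (which the paper does not spell out), and obtain (iii) $\Rightarrow$ (iv) $\Rightarrow$ (v) $\Rightarrow$ (iii) by the spectral theorem for the bounded non-negative self-adjoint operator $\Gamma$ together with the fixed-point observation on $\ker\Gamma$. Your route is shorter, self-contained, and in one respect tighter: the paper's assertion that ``the equivalence of (i) and (iv) directly follows'' from its formula \eqref{47} silently uses that the targets $h$ range over all of $H$, whereas your (v) $\Rightarrow$ (iii) step is airtight. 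What the paper's approach buys in exchange is the explicit minimizing family $(u_{\varepsilon},\{v^{k}_{\varepsilon}\})$, which is precisely the ingredient reused in semilinear extensions of such results; your spectral argument proves the equivalence without exhibiting any controls.
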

	\begin{proof}
		The equivalence between \( (i) \Leftrightarrow (ii) \) is standard. The approximately controllability of system \eqref{eq:fractional_main} on \([0, b]\) is equivalent to the density of \( \text{Im} \mathcal{M} \) in \( H \). In other words, the kernel of \( \mathcal{M}^* \) is trivial in \( H \). Similarly, 
		\begin{align*}
			\mathcal{M}^{*}\varphi=\left(B^{*}p(\cdot), \big\{E^{*}_{k}p(t^{+}_{k})\big\}_{k=1}^{n}\right)=0
		\end{align*}
		implies \( \varphi = 0 \). The equivalence \( (i) \Leftrightarrow (iii) \) is a well-established result, (see \cite{5} page 207). The equivalence \( (iv) \Leftrightarrow (v) \) stems from the positivity of
		\[ \varepsilon \left( \varepsilon I + \Omega_{t_{n}}^{b}+\Psi^{t_{n}}_{0}+\tilde{\Omega}_{t_{n}}^{b}+\tilde{\Psi}^{t_{n}}_{0} \right)^{-1}. \]
		We prove only \( (i) \Rightarrow (iv) \). To achieve this, consider the functional
		\begin{align*}
			\mathcal{J}_{\varepsilon}(\varphi)=\frac{1}{2}\Vert \mathcal{M}^{*}\varphi\Vert^{2}+\frac{\varepsilon}{2}\Vert\varphi\Vert^{2}-\Big\langle \varphi, h- \mathcal{S}_{\alpha}(b-t_{n})\prod_{j=n}^{1}(1+D_{j})\mathcal{S}_{\alpha}(t_{j}-t_{j-1})x_{0}\Big\rangle.
		\end{align*}
		The mapping \( \varphi \rightarrow \mathcal{J}_{\varepsilon}(\varphi) \) is continuous and strictly convex. The functional \( \mathcal{J}_{\varepsilon}(\cdot) \) possesses a unique minimum \( \tilde{\varphi}_{\varepsilon} \) that defines a mapping \( \Phi : H \rightarrow H \). Since \( \mathcal{J}_{\varepsilon}(\varphi) \) is Frechet differentiable at \( \tilde{\varphi}_{\varepsilon} \), by the optimality of \( \tilde{\varphi}_{\varepsilon} \), we must have
		\begin{align}\label{11}
			\frac{d}{d\varphi}\mathcal{J}_{\varepsilon}(\varphi)=&\Omega_{t_{n}}^{b}\tilde{\varphi}_{\varepsilon}+\Psi^{t_{n}}_{0}\tilde{\varphi}_{\varepsilon}+\tilde{\Omega}_{t_{n}}^{b}\tilde{\varphi}_{\varepsilon}+\tilde{\Psi}^{t_{n}}_{0}\tilde{\varphi}_{\varepsilon}+\varepsilon\tilde{\varphi}_{\varepsilon}-h\nonumber\\
			+&\mathcal{S}_{\alpha}(b-t_{n})\prod_{j=n}^{1}(1+D_{j})\mathcal{S}_{\alpha}(t_{j}-t_{j-1})x_{0}=0.
		\end{align}
		
		Solving \eqref{11} for $\tilde{\varphi}_{\varepsilon}$, we obtain
		\begin{align}\label{rtu12}
			\tilde{\varphi}_{\varepsilon}=\left( \varepsilon I + \Omega_{t_{n}}^{b}+\Psi^{t_{n}}_{0}+\tilde{\Omega}_{t_{n}}^{b}+\tilde{\Psi}^{t_{n}}_{0} \right)^{-1}\left(h-\mathcal{S}_{\alpha}(b-t_{n})\prod_{j=n}^{1}(1+D_{j})\mathcal{S}_{\alpha}(t_{j}-t_{j-1})x_{0}\right).
		\end{align}
		Defining \( u_{\varepsilon}(s) \) and \( \{ v_{\varepsilon}^k \}_{k=1}^n \) as follows: 
		
		\begin{align}\label{control}
        \begin{cases}
            u_{\varepsilon}(s) =\bigg(\displaystyle\sum_{k=1}^{n} B^* (t_k-s)^{\alpha-1}\mathcal{P}^{*}_{\alpha} (t_{k} - s)  (\mathcal{I}+D^{*}_{k})\\
            \displaystyle\prod_{i=k+1}^{n} \mathcal{S}^{*}_{\alpha}(t_i - t_{i-1}) (\mathcal{I}+D^{*}_{i})\mathcal{S}^{*}_{\alpha}(b - t_{n}) \chi(t_{k-1}, t_k)  \\           + B^{*} (b-s)^{\alpha-1}\mathcal{P}^{*}_{\alpha} (b - s) \chi(t_{n}, b)\bigg) \tilde{\varphi}_{\varepsilon}, \\
			v^{\varepsilon}_{n} = E^{*}_{n} \mathcal{S}^{*}_{\alpha} (b - t_{n}) \tilde{\varphi}_{\varepsilon}, \\
			v_{\varepsilon}^k = E^{*}_{k}\displaystyle\prod_{i=k}^{n} \mathcal{S}^{*}_{\alpha}(t_{i}-t_{i-1}) (\mathcal{I}+D^{*}_{i})\mathcal{S}^{*}_{\alpha}(b-t_{n})\tilde{\varphi}_{\varepsilon}, \,\, k = 1, \ldots, n-1,
        \end{cases}
		\end{align}
		
		we obtain from \eqref{11} and \eqref{rtu12} that 
		\begin{align}\label{47}
			x_{\varepsilon}(b)-h=&-\varepsilon \tilde{\varphi}_{\varepsilon}=-\varepsilon\left( \varepsilon I + \Omega_{t_{n}}^{b}+\Psi^{t_{n}}_{0}+\tilde{\Omega}_{t_{n}}^{b}+\tilde{\Psi}^{t_{n}}_{0} \right)^{-1}\nonumber\\
			\times &\left(h-\mathcal{S}^{*}_{\alpha}(b-t_{n})\prod_{j=n}^{1}(1+D_{j})\mathcal{S}_{\alpha}(t_{j}-t_{j-1})x_{0}\right),
		\end{align}
		where 
		\begin{align*}
			x_{\varepsilon}(b)=x\left(b; x_{0}, u^{\varepsilon}, \big\{v^{\varepsilon}_{k}\big\}_{k=1}^{n}\right)=&\mathcal{S}_{\alpha}(b-t_{n})\prod_{j=n}^{1}(1+D_{j})\mathcal{S}_{\alpha}(t_{j}-t_{j-1})x_{0}\\
+&\Omega_{t_{n}}^{b}\tilde{\varphi}_{\varepsilon}+\Psi^{t_{n}}_{0}\tilde{\varphi}_{\varepsilon}+\tilde{\Omega}_{t_{n}}^{b}\tilde{\varphi}_{\varepsilon}+\tilde{\Psi}^{t_{n}}_{0}\tilde{\varphi}_{\varepsilon}+\varepsilon\tilde{\varphi}_{\varepsilon}.
		\end{align*}
		The equivalence of $(i)$ and $(iv)$ follows directly from \eqref{47}.
	\end{proof}

	\begin{corollary} If one of the operators $\Omega_{t_{n}}^{b},\Psi^{t_{n}}_{0}, \tilde{\Omega}_{t_{n}}^{b},\tilde{\Psi}^{t_{n}}_{0}$ are strictly positive, then the combined operator $\Omega_{t_{n}}^{b}+\Psi^{t_{n}}_{0}+ \tilde{\Omega}_{t_{n}}^{b}+\tilde{\Psi}^{t_{n}}_{0}$ is also strictly positive. Consequently, the system \eqref{eq:fractional_main} is approximately controllable on the interval $[0, b]$.
	\end{corollary}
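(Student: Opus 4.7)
The plan is to reduce the claim to a direct application of Theorem \ref{t13}, specifically the equivalence $(i) \Leftrightarrow (iii)$, once strict positivity of the sum has been established. The previous lemma already tells us that each of the four operators $\Omega^{t_{n}}_{0}$, $\Psi^{t_{n}}_{0}$, $\tilde{\Omega}^{t_{n}}_{0}$, $\tilde{\Psi}^{t_{n}}_{0}$ is non-negative on $H$, so the only real work is to show that adding a strictly positive operator to non-negative ones preserves strict positivity, which is a general Hilbert-space fact.

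First I would fix an arbitrary nonzero $\varphi \in H$ and, without loss of generality, assume that $\Omega^{t_{n}}_{0}$ is the strictly positive operator (the argument is identical for any of the other three). By assumption, $\langle \Omega^{t_{n}}_{0}\varphi, \varphi\rangle > 0$, while by the non-negativity asserted in the preceding lemma, $\langle \Psi^{t_{n}}_{0}\varphi, \varphi\rangle \geq 0$, $\langle \tilde{\Omega}^{t_{n}}_{0}\varphi, \varphi\rangle \geq 0$, and $\langle \tilde{\Psi}^{t_{n}}_{0}\varphi, \varphi\rangle \geq 0$. Adding these four inequalities yields
\[
\Big\langle \big(\Omega^{t_{n}}_{0}+\Psi^{t_{n}}_{0}+\tilde{\Omega}^{t_{n}}_{0}+\tilde{\Psi}^{t_{n}}_{0}\big)\varphi, \varphi\Big\rangle > 0
\]
for every nonzero $\varphi \in H$, which is precisely the definition of strict positivity.

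Once this is in hand, condition (iii) of Theorem \ref{t13} is satisfied, and the equivalence $(iii) \Leftrightarrow (i)$ immediately delivers approximate controllability of system (1) on $[0,T]$. I do not expect a genuine obstacle here: the argument is a routine monotonicity observation for quadratic forms combined with the already-proven equivalence theorem. The only point requiring mild care is confirming that the non-negativity of each individual operator is legitimately available from the construction in the previous lemma, where each operator appears in a conjugated sandwich form $L L^{*}$ (so that $\langle L L^{*} \varphi, \varphi\rangle = \|L^{*}\varphi\|^{2} \geq 0$); this is essentially stated there and can be invoked without further computation.
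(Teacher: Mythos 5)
Your argument is correct and matches the paper's own proof essentially line for line: both establish strict positivity of the sum from the non-negativity of all four operators plus strict positivity of one, then invoke the equivalence $(i)\Leftrightarrow(iii)$ of Theorem \ref{t13}. Your version merely spells out the quadratic-form addition that the paper leaves implicit.
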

	\begin{proof}
		The operators \(\Omega_{t_{n}}^{b}, \Psi^{t_{n}}_{0}, \tilde{\Omega}_{t_{n}}^{b}, \tilde{\Psi}^{t_{n}}_{0}\) are all non-negative. Hence, if one of these operators is strictly positive, their sum \(\Omega_{t_{n}}^{b} + \Psi^{t_{n}}_{0} + \tilde{\Omega}_{t_{n}}^{b} + \tilde{\Psi}^{t_{n}}_{0}\) will also be strictly positive. Using Theorem \(\ref{t13}\) $(iii)$, we can therefore infer that the system \(\eqref{eq:fractional_main}\) is approximately controllable over the interval \([0, b]\).
	\end{proof} 
	\begin{corollary}
		Assume $A: H\to H$ is a linear bounded operator. The system \eqref{eq:fractional_main} is approximately controllable on $[0, b]$ if 
		\begin{align}\label{t123}
			\overline{span\{A^{m}BU :  m=0,1,2,\dots\}}=H.
		\end{align}
	\end{corollary}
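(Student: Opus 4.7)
The plan is to invoke the equivalence $(i)\Leftrightarrow(ii)$ of Theorem \ref{t13} and show that, under \eqref{t123}, any $\varphi\in H$ satisfying $\mathcal{M}^{*}\varphi=0$ must vanish. From the explicit form of $\mathcal{M}^{*}\varphi$, the equation $\mathcal{M}^{*}\varphi=0$ implies in particular its first component $B^{*}p(\cdot)\equiv 0$ on $(t_{n},T]$, i.e.\ $B^{*}\mathcal{C}_{\alpha}^{*}(T-t)\varphi=0$ for $t\in(t_{n},T]$. Since $(T-t)^{\alpha-1}>0$ on $(t_n,T)$, this is equivalent to
\[
B^{*}\mathcal{P}_{\alpha}^{*}(T-t)\varphi = 0,\qquad t\in(t_{n},T).
\]

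Next I would exploit the boundedness of $A$. Because $\|A\|<\infty$, the associated semigroup expands as $\mathcal{Q}(r)=e^{-rA}=\sum_{n=0}^{\infty}(-r)^{n}A^{n}/n!$ with norm convergence uniform on compact subsets of $[0,\infty)$. Substituting this series into
\[
\mathcal{P}_{\alpha}(s)=\int_{0}^{\infty}\alpha\theta\,\Psi_{\alpha}(\theta)\,\mathcal{Q}(s^{\alpha}\theta)\,d\theta,
\]
exchanging sum and integral, and evaluating the moments via Proposition \ref{ty}(iii) with $r=n+1$, I obtain a norm-convergent expansion
\[
\mathcal{P}_{\alpha}(s)=\sum_{n=0}^{\infty}\gamma_{n}\,s^{\alpha n}\,A^{n},\qquad \gamma_{n}=\frac{(-1)^{n}\alpha(n+1)}{\Gamma(1+\alpha(n+1))},\quad s\geq 0,
\]
with every $\gamma_{n}\neq 0$. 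Plugging this into the identity above and writing $s=T-t$ yields
\[
\sum_{n=0}^{\infty}\gamma_{n}s^{\alpha n}B^{*}(A^{*})^{n}\varphi = 0,\qquad s\in(0,T-t_{n}).
\]
Setting $\tau=s^{\alpha}$ turns the left-hand side into an ordinary $H$-valued analytic power series in $\tau$ vanishing on an open interval; by the uniqueness of such series each coefficient must vanish, so $B^{*}(A^{*})^{n}\varphi=0$ for every $n\geq 0$.

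Duality then closes the argument: for every $u\in U$ and every $m\geq 0$,
\[
\langle A^{m}Bu,\varphi\rangle=\langle u,B^{*}(A^{*})^{m}\varphi\rangle=0,
\]
so $\varphi$ annihilates $span\{A^{m}BU:m\geq 0\}$, and hypothesis \eqref{t123} forces $\varphi=0$. Approximate controllability of system (1) then follows from Theorem \ref{t13}(ii). The step requiring most care is the term-by-term bookkeeping: the interchange of sum and integral is legitimate thanks to $\|A^{n}\|\leq\|A\|^{n}$ and the super-exponential decay of $1/\Gamma(1+\alpha(n+1))$, while the substitution $\tau=s^{\alpha}$ is essential (direct differentiation at $s=0$ is hampered by the non-integer exponents $s^{\alpha n}$) in order to reduce to the standard uniqueness theorem for analytic power series.
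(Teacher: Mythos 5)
Your proposal is correct, and its skeleton matches the paper's: both arguments reduce the problem to showing that $B^{*}(A^{*})^{m}\varphi=0$ for all $m\geq 0$ whenever $\varphi$ annihilates the reachable set, and then conclude $\varphi=0$ from \eqref{t123}. The differences are in how the two key steps are carried out, and in both cases your version is the more explicit one. First, where the paper disposes of the step from $B^{*}\mathcal{C}_{\alpha}^{*}(T-s)\varphi=0$ to $B^{*}(A^{*})^{m}\varphi=0$ with the single phrase ``by successively differentiating this identity,'' you actually supply the mechanism: since $A$ is bounded, $\mathcal{Q}(r)$ is an entire operator-valued series in $r$, and integrating against $\alpha\theta\Psi_{\alpha}(\theta)$ term by term (justified by the moment formula of Proposition \ref{ty}(iii) and the super-exponential decay of $1/\Gamma(1+\alpha(n+1))$) gives $\mathcal{P}_{\alpha}(s)=\sum_{n}\gamma_{n}s^{\alpha n}A^{n}$ with $\gamma_{n}\neq 0$; the substitution $\tau=s^{\alpha}$ then converts the vanishing identity into a genuine power series whose coefficients must all vanish. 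This is a real improvement in rigor, because naive repeated differentiation in $s$ is awkward here: the exponents $s^{\alpha n}$ are non-integer and $\mathcal{C}_{\alpha}(T-s)=(T-s)^{\alpha-1}\mathcal{P}_{\alpha}(T-s)$ is singular at $s=T$, so the paper's one-line induction hides exactly the difficulty your $\tau$-substitution resolves. Second, the paper closes by citing \cite{6} for the equivalence of \eqref{t123} with $\bigcap_{m\geq 0}\ker\{B^{*}(A^{*})^{m}\}=\{0\}$, whereas you prove that equivalence directly by the duality $\langle A^{m}Bu,\varphi\rangle=\langle u,B^{*}(A^{*})^{m}\varphi\rangle$; this is elementary and self-contained. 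The only cosmetic remarks: the paper frames the argument as a contradiction starting from $\|\mathcal{M}^{*}\varphi\|^{2}=\langle(\Omega^{t_{n}}_{0}+\Psi^{t_{n}}_{0}+\tilde{\Omega}^{t_{n}}_{0}+\tilde{\Psi}^{t_{n}}_{0})\varphi,\varphi\rangle=0$ while you invoke Theorem \ref{t13}(ii) directly, which is the same thing; and your series $\sum_{n}\gamma_{n}\tau^{n}B^{*}(A^{*})^{n}\varphi$ is $U$-valued rather than $H$-valued, which does not affect the uniqueness argument.
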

	\begin{proof}
		Assume by contradiction that
		\begin{align*}
			\text{Im} \mathcal{M}=\{x(b)=x(b; 0,u, \{v_{k}\}_{k=1}^{n})\in L^{2}([0,b],U)\times U^{n}\}
		\end{align*}
if it is not dense in $H$, then for some non-zero $\varphi\in H$ $\langle x(b),\varphi\rangle=0$:
		\begin{align*}
			\big\langle x(b),\varphi\big\rangle=&\int_{0}^{b}\Big\langle Bu(s),p(s)\Big\rangle ds
			+\sum_{k=1}^{n}\Big\langle v_{k-1},E^{*}_{k-1}p(t^{+}_{k-1})\Big\rangle,\\
			&\text{for any } (u,\{v_{k}\}_{k=1}^{n})\in L^{2}([0,b],U)\times U^{n},
		\end{align*}
		where 
		
		Given that \(p\) is a solution of equation \eqref{45} in the adjoint equation with \(\varphi \neq 0\), we can easily derive the following:
		
		\[
		\|\mathcal{M}^{*} \phi\|^2 = \Big\langle \left(\Omega^{b}_{t_{n}}+\Psi^{t_{n}}_{0}+ \tilde{\Omega}^{b}_{t_{n}}+\tilde{\Psi}^{t_{n}}_{0}\right)\varphi,\varphi\Big\rangle = 0
		\]
		
		This leads to:
		
		\[
		\Omega_{t_{n}}^{b} = 0 \implies B^{*} (b-s)^{\alpha-1}\mathcal{P}_{\alpha}^{*} (b - s) \varphi = 0, \quad t_{n} \leq s \leq b.
		\]
		
		By successively differentiating this identity, we can show by induction that:
		
		\[
		B^{*} \varphi = B^{*} A^* \varphi = \cdots = B^* (A^*)^m \varphi = 0, \quad m = 0, 1, 2, \ldots
		\]
		
		Therefore, we have:
		
		\[
		0 \neq \varphi \in \bigcap_{m=0}^{\infty} \ker \{B^* (A^*)^m\}
		\]
		
		However, the condition \eqref{t123} is equivalent to:
		
		\[
		\bigcap_{m=0}^{\infty} \ker \{B^* (A^*)^m\} = 0
		\]
		
		(See \cite{6}). This contradiction proves that the system \eqref{eq:fractional_main} is approximately controllable on \([0, b]\).
	\end{proof}	
\section{Application to a Fractional Heat Equation with Impulse}

In this section, we illustrate the applicability of the theoretical results by considering a concrete impulsive fractional heat equation. Let us study the following system:
\begin{align}\label{eq:heat-system}
\begin{cases}
\displaystyle{^{C}D_{t_k}^{\frac{2}{3}} \xi(t,s) = \frac{\partial^{2}}{\partial s^{2}} \xi(t,s) + B u(t,s)}, & t \in [0,1] \setminus \{\frac{1}{2}\}, \; s \in [0,\pi], \; k=0,1,\\[6pt]
\xi(t,0) = \xi(t,\pi) = 0, & t \in [0,1], \\[6pt]
\Delta \xi(\tfrac{1}{2}, s) =  \xi(\tfrac{1}{2}, s) + \sin(s), \\[6pt]
\Delta \xi(1, s) =  \xi(1, s) + \cos(s), \\[6pt]
\xi(0,s) = \xi_0(s).
\end{cases}
\end{align}

Take \(H=U=L^2[0,\pi]\) and let the operator \(A : D(A)\subset H\to H\) be defined by \(Ay=y''\) with domain  
\[
D(A)=\{\xi \in H : \xi,\, \xi^{\prime}\ \text{are absolutely continuous}, \; \xi^{\prime\prime} \in H,\; \xi(0)=\xi(\pi)=0\}.
\]

Then \(A\) can be written as  
\[
Ay=-\sum_{n=1}^{\infty}n^2\langle y, e_n\rangle e_n, \qquad y\in D(A),
\]
where \(e_n(\xi)=\sqrt{\frac{2}{\pi}}\sin (n\xi)\) is an orthonormal basis of \(H\). It is well known that \(A\) is the infinitesimal generator of a compact, analytic, and self‑adjoint semigroup \(\{S(t)\}_{t>0}\) on \(H\) given by  
\[
S(t)y=\sum_{n=1}^{\infty} e^{-n^2 t}\langle y, e_n\rangle e_n, \qquad y\in H.
\]

Define the bounded control operator \(B:U\to H\) by  
\[
Bu(t)=\sum_{n=1}^{\infty}\overline{u}_n(t)\, e_n,
\]
where \(u(t)=\sum_{n=1}^{\infty} \langle u(t), e_n\rangle e_n\) and  
\[
\overline{u}_n(t)=\begin{cases}
0, & 0\leq t<1-\frac{1}{n^2},\\[4pt]
\langle u(t),e_n\rangle, & 1-\frac{1}{n^2}\leq t\leq 1.
\end{cases}
\]

One easily checks that \(\|Bu\|\leq\|u\|\); hence \(B\in\mathcal{L}(U,H)\).

The system \eqref{eq:heat-system} can be rewritten abstractly in \(H\) as  
\begin{align}\label{eq:abs}
\begin{cases}
\displaystyle{^{C}D_{t_k}^{\frac{2}{3}} y(t) = A y(t) + B u(t)}, & t \in [0,1] \setminus \{\frac{1}{2}\}, \; k=0,1,\\[6pt]
\Delta y(\tfrac{1}{2}) =  y(\tfrac{1}{2}) + \sin(\cdot), \\[6pt]
\Delta y(1) =  y(1) + \cos(\cdot), \\[6pt]
y(0) =y_0,
\end{cases}
\end{align}
where \(y(t)=\xi(t,\cdot)\).

For the sake of illustration we assume that the impulse operators are identity operators, i.e. \(D_k=I\) and \(E_k=I\) for \(k=1,2\). Then, using formula (13) with \(n=2\), \(t_1=\frac{1}{2}\), \(t_2=1\), \(b=1\) and \(\alpha=\frac{2}{3}\), we obtain the controllability operator  

\[
\mathcal{M}: L^2([0, 1], U) \times U \to H,
\]
defined by  
\begin{align}\label{eq:Mdef}
\begin{aligned}
\mathcal{M}\big(u(\cdot),v_1,v_2\big) = 
&\int_{\frac{1}{2}}^{1}(1-s)^{-\frac{1}{3}}\mathcal{P}_{\frac{2}{3}}(1-s)B u(s)\,ds  \\
&+ 2\, \mathcal{S}_{\frac{2}{3}}\!\left(\frac{1}{2}\right)\int_{0}^{\frac{1}{2}}\Bigl(\frac{1}{2}-s\Bigr)^{-\frac{1}{3}}\mathcal{P}_{\frac{2}{3}}\Bigl(\frac{1}{2}-s\Bigr)B u(s)\,ds \\
&+\mathcal{S}_{\frac{2}{3}}\!\left(\frac{1}{2}\right)v_1 + v_2 .
\end{aligned}
\end{align}

By Theorem 4, the approximate controllability of \eqref{eq:abs} is equivalent to the condition that \(\mathcal{M}^*h=0\) implies \(h=0\). According to Lemma \ref{lemmaadj}, the adjoint operator is  

\[
\mathcal{M}^*h = \Bigl( B^*p(\cdot),\; p\bigl(\tfrac{1}{2}^+\bigr),\; p(1^+) \Bigr),
\]
where 
\[
B^*p(t)=
\begin{cases}
\displaystyle
B^*(1-t)^{-\frac{1}{3}}\mathcal{P}_{\frac{2}{3}}^*(1-t)h, & \tfrac{1}{2}<t\leq 1,\\[8pt]
\displaystyle
2^{\frac{4}{3}}\,B^*\!\Bigl(\frac{1}{2}-t\Bigr)^{-\frac{1}{3}}
\mathcal{P}_{\frac{2}{3}}^*\!\Bigl(\frac{1}{2}-t\Bigr)
\mathcal{P}_{\frac{2}{3}}^*\!\Bigl(\frac{1}{2}\Bigr)h, & 0<t\le \tfrac{1}{2},
\end{cases}
\]
and the impulsive adjoint terms are
\begin{align}\label{eq:padj}
\begin{cases}
p\bigl(\tfrac{1}{2}^+\bigr)= 2^{\frac{4}{3}}\,\mathcal{P}_{\frac{2}{3}}^*\!\Bigl(\frac{1}{2}\Bigr)h,\\[6pt]
p(1^+)=h .
\end{cases}
\end{align}

Then, for $\frac{1}{2}<t\leq 1$, we have 

\begin{align*}
    &\qquad B^*p(t)=B^*(1-t)^{-\frac{1}{3}}\mathcal{P}_{\frac{2}{3}}^*(1-t)h\\
    &=\frac{2}{3}\sum_{n=1}^{\infty}1_{[1-\frac{1}{n^2},1]}(t) \int_0^\infty \theta\, \Psi_{\frac{2}{3}}(\theta) e^{-n^2 (1-t)^\frac{2}{3}}\langle h, e_n\rangle e_n \, d\theta=0\\
    &\implies \int_0^\infty \theta\, \Psi_{\frac{2}{3}}(\theta) e^{-n^2 (1-t)^\frac{2}{3}} \, d\theta\, \langle h, e_n\rangle=0,\quad t\in \left[1-\frac{1}{n^2},1\right] \\
    &\implies \langle h, e_n\rangle=0\implies h=0.
\end{align*}
For the interval \( 0 \leq t \leq \frac{1}{2} \), we have

\[
\begin{aligned}
B^*p(t) &= 2^{1/3} \sum_{n=1}^{\infty} \mathbf{1}_{[1-\frac{1}{n^2},1]}(t) \, h_n \, k_n\!\left( \frac{1}{2} \right) k_n\!\left( \frac{1}{2} - t \right) e_n, \\
k_n(t) &= \int_0^\infty \frac{2}{3} \theta \, \Psi_{\frac{2}{3}}(\theta) e^{-n^2 t^{2/3}} d\theta > 0.
\end{aligned}
\]

For \(0 \le t \le \frac{1}{2}\), the indicator is nonzero only for \(n=1\), giving  

\[
B^*p(t) = 2^{1/3} \, h_1 \, k_1\!\left( \frac{1}{2} \right) k_1\!\left( \frac{1}{2} - t \right) e_1.
\]

Thus \(B^*p(t)=0 \implies h_1 = 0\). Combined with the result for \(\frac{1}{2} < t \le 1\) (where \(\langle h, e_n \rangle = 0\) for all \(n\)), we obtain \(h = 0\). 
Hence \(\mathcal{M}^* h = 0 \implies h = 0\), proving approximate controllability of \eqref{eq:abs} on \([0,1]\).

\section{Conclusion}

This study establishes a complete characterization of approximate controllability for linear fractional impulsive evolution equations in Hilbert spaces by developing mild solutions through fractional solution operators and deriving necessary and sufficient conditions via the convergence of an associated family of impulsive resolvent operators. The main contributions include a constructive solution approach, an adjoint formulation with Green-type identity, equivalent resolvent-based controllability criteria, and verification through a fractional heat equation example. This work bridges integer-order impulsive theory with fractional dynamics, addressing the interplay of memory effects and instantaneous jumps. Future research may extend these results to stochastic settings, non-local conditions, and inclusion problems, broadening both theoretical foundations and practical applications in anomalous diffusion, viscoelastic materials, and biological systems with memory.

\end{document}